\newtheorem{thm}{{\bf Theorem}}[section]
\newtheorem{lemma}[thm]{\bf Lemma}
\newtheorem{pro}[thm]{\bf Proposition}
\newtheorem{definition}[thm]{\bf Definition}
\newtheorem{preproof}{{\bf Proof.}}
\newenvironment{proof}[1]{\begin{preproof}{\rm
               #1}\hfill{$\rule{2mm}{2mm}$}}{\end{preproof}}
\begin{document}
\title{\Large {\bf A characterization of some graphs with metric dimension two}}

\author{
{\sc Ali Behtoei$^a$\thanks{a.behtoei@sci.ikiu.ac.ir}}, {\sc Akbar Davoodi$^b$\thanks{a.davoodi@math.iut.ac.ir}}, {\sc Mohsen Jannesari$^c$\thanks{mjannesari@shahreza.ac.ir}} and
{\sc Behnaz Omoomi$^d$\thanks{bomoomi@cc.iut.ac.ir}}\\
[1mm]
{$^a$\it \small Department of Mathematics, Imam Khomeini International University, 34149-16818, Qazvin, Iran
}\\
{$^{b,d}$ \it \small Department of Mathematical Sciences, Isfahan University of Technology, 84156-83111, Isfahan, Iran}\\
{$^c$ \it\small University of Shahreza, 86149-56841, Shahreza, Iran}}
\date{}

\maketitle

\begin{abstract}
 A set $W\subseteq V(G)$ is called a  resolving set, if
for each pair of distinct vertices $u,v\in V(G)$ there exists $t\in W$
such that $d(u,t)\neq d(v,t)$,  where $d(x,y)$ is the distance
between  vertices $x$ and $y$. The cardinality of a minimum
resolving set for $G$ is called the  metric dimension of $G$ and
is denoted by $\dim_M(G)$.
A $k$-tree is a chordal graph all of whose maximal cliques are the same size $k+1$
 and all of whose minimal clique separators are also all the same size $k$.
A $k$-path is a $k$-tree with maximum degree $2k$, where for each integer  $j$,  $k\leq j<2k$, there exists a unique pair of vertices, $u$ and $v$, such that $\deg(u)=\deg(v)=j$.
In this paper, we prove that if $G$ is a $k$-path, then $\dim_M(G)=k$.
Moreover, we provide a characterization of all $2$-trees with metric dimension two.
\end{abstract}

\section{Introduction}
Throughout this paper all graphs are finite, simple and undirected. The notions $\delta$, $\Delta$ and $N_G(v)$ stand for minimum degree, maximum degree and the set of neighbours of vertex $v$ in $G$, respectively.

For an ordered set $W=\{w_1,w_2,\ldots,w_k\}$ of vertices and a
vertex $v$ in a connected graph $G$, the $k$-vector
$r(v|W):=(d(v,w_1),d(v,w_2),\ldots,d(v,w_k))$ is  called  the
\textit{metric representation} of $v$ with respect to $W$, where $d(x,y)$
is the distance between two vertices $x$ and $y$. The set $W$ is
called  a \textit{resolving set} for $G$ if distinct vertices of $G$ have
distinct representations with respect to $W$. 
We say a set $S\subseteq V(G)$ \textit{resolves} a set $T\subseteq V(G)$ if for each pair of distinct vertices $u$ and $v$ in $T$ there is a vertex $s\in S$ such that $d(u,s)\neq d(v,s)$. A  minimum resolving set is called a \textit{basis}  and the \textit{metric dimension} of $G$, $\dim_M(G)$, is
the cardinality of a basis  for $G$. A graph with metric dimension $k$
 is called $k$-\textit{dimensional}.

The concept of the resolving set has various applications in diverse areas
including coin weighing problems~\cite{coin},
 network discovery and verification~\cite{net2},
  robot navigation~\cite{landmarks},
mastermind game~\cite{cartesian product},
problems of pattern recognition and image processing~\cite{digital},
 and combinatorial search and optimization~\cite{coin}.

These concepts were introduced by  Slater in~\cite{Slater1975}.
 He described the usefulness of these concepts when
working with U.S. Sonar and Coast Guard Loran stations.
Independently, Harary and Melter~\cite{Harary} discovered these
concepts. In \cite{landmarks}, it is proved that  determining the metric dimension of a graph in general is an $NP$-complete problem, but the metric dimension of trees can be  obtained  by a polynomial time algorithm.

 It is obvious that for every graph $G$ of order $n$, $1\leq \dim_M(G) \leq  n-1$.
  Chartrand et al.~\cite{Ollerman} proved that for $n\geq 2$, $\dim_M(G)=n-1$
if and only if $G$ is the complete graph $K_n$. They also provided a
 characterization of  graphs of order $n$ and
metric dimension $n-2$~\cite{Ollerman}.  Graphs with metric dimension $n-3$ are characterized in~\cite{n-3}.
  Khuller et al.~\cite{landmarks} and Chartrand et al.~\cite{Ollerman}
  proved that $\dim_M(G)=1$ if and only if $G$ is a path.
Moreover,  in~\cite{chang}  some properties of $2$-dimensional graphs are obtained.
\begin{thm}\label{thm:degree of basis elements}{\em\cite{chang}}
Let $G$ be a $2$-dimensional graph. If  $\{a,b\}$ is a basis for $G$, then
\begin{enumerate}
\item
there is a unique shortest path $P$ between $a$ and $b$, 
\item
the degrees of $a$ and $b$ are at most three, 
\item
 the degree of each internal vertex  on $P$ is  at most five.
\end{enumerate}
\end{thm}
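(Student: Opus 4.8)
The plan is to exploit the single structural fact that, since $\{a,b\}$ is a basis, the map $\phi\colon V(G)\to\mathbb{Z}^2$ defined by $\phi(v)=(d(v,a),d(v,b))$ is injective. Two elementary constraints then drive everything: first, adjacent vertices have representations differing by at most $1$ in each coordinate, i.e. if $uv\in E(G)$ then $|d(u,t)-d(v,t)|\le 1$ for every $t\in V(G)$; second, the triangle inequality gives $d(v,a)+d(v,b)\ge d(a,b)$ for all $v$. Write $\ell:=d(a,b)$ throughout.

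For the uniqueness of the shortest path, I would observe that every vertex $w$ lying on some shortest $a$--$b$ path at distance $i$ from $a$ satisfies $d(w,a)=i$ and $d(w,b)=\ell-i$, hence $\phi(w)=(i,\ell-i)$. If two distinct such vertices occurred at the same level $i$, they would share a representation, contradicting injectivity of $\phi$. Thus each distance level from $a$ contains exactly one vertex of a shortest path, and these vertices necessarily form a single path; this is the desired unique shortest path $P$.

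For the degree bound on the endpoints, consider the neighbours of $a$. Each such neighbour $u$ has $d(u,a)=1$, while the first constraint forces $d(u,b)\in\{\ell-1,\ell,\ell+1\}$. Hence the representation of any neighbour of $a$ is one of the three vectors $(1,\ell-1)$, $(1,\ell)$, $(1,\ell+1)$, and injectivity of $\phi$ allows at most one neighbour per vector. Therefore $\deg(a)\le 3$, and the symmetric argument at $b$ gives $\deg(b)\le 3$.

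Finally, for an internal vertex $w$ of $P$ at distance $i$ from $a$ (so $\phi(w)=(i,\ell-i)$ with $1\le i\le\ell-1$), each neighbour $u$ satisfies $d(u,a)\in\{i-1,i,i+1\}$ and $d(u,b)\in\{\ell-i-1,\ell-i,\ell-i+1\}$, which confines $\phi(u)$ to a $3\times 3$ grid of nine candidate vectors. I would prune this grid as follows: the triangle inequality $d(u,a)+d(u,b)\ge\ell$ eliminates the three vectors whose coordinate sum is less than $\ell$, namely $(i-1,\ell-i-1)$, $(i-1,\ell-i)$ and $(i,\ell-i-1)$; and the vector $(i,\ell-i)$ is excluded because it equals $\phi(w)$ while $u\ne w$. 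Exactly five admissible vectors remain, so injectivity of $\phi$ forces $\deg(w)\le 5$. The only genuinely delicate step is this last count in Part~(3)---correctly deciding which of the nine grid cells are forbidden---whereas Parts~(1) and~(2) are immediate once the injectivity of $\phi$ and the two distance constraints are in place.
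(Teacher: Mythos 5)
Your proof is correct and complete: the injectivity of $v\mapsto(d(v,a),d(v,b))$, the unit-step property along edges, and the triangle inequality $d(u,a)+d(u,b)\ge d(a,b)$ are exactly the right tools, and your count of five admissible representations for a neighbour of an internal vertex of $P$ is accurate. Note, however, that the paper does not prove this theorem at all --- it is imported verbatim from the cited reference \cite{chang} --- so there is no in-paper argument to compare against; your write-up is the standard self-contained derivation and would serve as a proof of the cited result.
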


A \textit{chordal graph} is a graph with no induced cycle of length greater than three.
A \textit{$k$-tree} is a chordal graph that  all of whose maximal cliques are the same size $k+1$
and all of whose minimal clique separators are also all the same size $k$.
In other words, a $k$-tree may be formed by starting with a set of $k+1$ pairwise adjacent vertices and then repeatedly adding vertices in such a way that each added vertex has exactly $k$ neighbours that form a $k$-clique. 

By the above definition, it is clear that if $G$ is a $k$-tree, then $\delta(G)=k$.
$1$-trees are the same as trees; $2$-trees are maximal series-parallel graphs~\cite{2tree} and include also the maximal outer-planar graphs. These graphs can be used to model series and parallel electric circuits.
 Planar $3$-trees are also known as Apollonian networks~\cite{3-tree}.

A \textit{$k$-path} is a $k$-tree with maximum degree  $2k$, where for each integer $j$,  $k\leq j<2k$, there exists a unique pair of vertices, $u$ and $v$, such that $\deg(u)=\deg(v)=j$.
On the other hand, regards to the recursive construction of $k$-trees, a $k$-path $G$ can be considered as a graph with vertex set $ V(G)=\{v_1,v_2,\ldots,v_n\}$ and edge set $ E(G)=\{v_iv_j:~ |i-j|\leq k\}.$  For instance, two different representations of a $2$-path $G$ with seven vertices $v_1,\ldots, v_7$ are shown in Figure \ref{fig:twoRep}.
\begin{center}
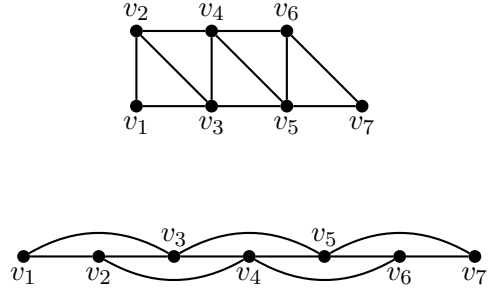

\begin{tikzpicture}
[inner sep=0.5mm, place/.style={circle,draw=black,fill=black,thick}]
\node[place] (v1) at (-2.5,.5) [label=below:$v_1$] {};
\node[place] (v2) at (-2.5,1.5) [label=above:$v_2$] {}edge [-,thick](v1);
\node[place] (v3) at (-1.5,.5) [label=below:$v_3$] {}edge [-,thick](v1)edge [-,thick](v2);
\node[place] (v4) at (-1.5,1.5) [label=above:$v_4$] {}edge [-,thick](v2)edge [-,thick](v3);
\node[place] (v5) at (-.5,.5) [label=below:$v_5$] {}edge [-,thick](v3)edge [-,thick](v4);
\node[place] (v6) at (-.5,1.5) [label=above:$v_6$] {}edge [-,thick](v4)edge [-,thick](v5);
\node[place] (v7) at (.5,.5) [label=below:$v_7$] {}edge [-,thick](v5)edge [-,thick](v6);
%+_____________________________________________________________________
\node[place] (v1') at (-4,-1.5) [label=below:$v_1$] {};
\node[place] (v2') at (-3,-1.5) [label=below:$v_2$] {}edge [-,thick](v1');
\node[place] (v3') at (-2,-1.5) [label=above:$v_3$] {}edge [-,thick, bend right](v1')edge [-,thick](v2');
\node[place] (v4') at (-1,-1.5) [label=below:$v_4$] {}edge [-,thick,bend left](v2')edge [-,thick](v3');
\node[place] (v5') at (0,-1.5) [label=above:$v_5$] {}edge [-,thick,bend right](v3')edge [-,thick](v4');
\node[place] (v6') at (1,-1.5) [label=below:$v_6$] {}edge [-,thick,bend left](v4')edge [-,thick](v5');
\node[place] (v7') at (2,-1.5) [label=below:$v_7$] {}edge [-,thick,bend right](v5')edge [-,thick](v6');
\end{tikzpicture}
\captionof{figure}{Two different representations of  a $2$-path.\label{fig:twoRep}}
\end{center}

In this paper, we show that the metric dimension of  each $k$-path (as a generalization of a path) is $k$.
Whereas, there are some examples of $2$-trees with metric dimension two that are not $2$-path.
 This fact motivates us to study the structure of $2$-dimensional $2$-trees.
As a  main result, we characterize the class of all $2$-trees with metric dimension two.

\section{Main Results}
In this section, we first prove that the metric dimension of each $k$-path is $k$. Then, we introduce a class of graphs which shows that the inverse of this fact is not true in general. Later on, we concern on the case $k=2$ and toward to investigating all $2$-trees with metric dimension two, we construct a family $\cal F$ of $2$-trees with metric dimension two. 
Finally, as the main result, we prove that the metric dimension of a $2$-tree $G$ is two if and only if $G$ belongs to  $\cal F$.

\begin{thm}\label{k-path}
If $G$ is a $k$-path, then $\dim_{_M}(G)=k$.
\end{thm}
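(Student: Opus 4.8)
The plan is to work throughout with the linear model $V(G)=\{v_1,\dots,v_n\}$, $E(G)=\{v_iv_j:|i-j|\le k\}$ supplied in the excerpt, and to record first the distance formula $d(v_i,v_j)=\lceil |i-j|/k\rceil$: any edge changes the index by at most $k$, so a walk from $v_i$ to $v_j$ needs at least $\lceil |i-j|/k\rceil$ edges, while stepping by $k$ (with one shorter final step) realizes this bound. Every distance computation below reduces to this formula.

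For the upper bound I would exhibit the resolving set $W=\{v_1,\dots,v_k\}$. For $i\le k$ the representation $r(v_i\mid W)$ has a single $0$ in coordinate $i$ and $1$ elsewhere, so these $k$ vertices are separated by the position of the $0$. For $i\ge k$ I would track the sum $s_i:=\sum_{\ell=1}^k d(v_i,v_\ell)$: in passing from $v_i$ to $v_{i+1}$ coordinate $\ell$ increases by $1$ exactly when $\ell\equiv i\pmod k$, and since $\ell$ runs through a complete residue system exactly one coordinate increases, whence $s_i=i-1$ is strictly increasing. The first group all have sum $k-1$, while the vertices with $i\ge k+1$ have strictly larger and pairwise distinct sums, so all representations are distinct and $\dim_{_M}(G)\le k$.

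For the lower bound, which I expect to be the crux, I would assume a resolving set $W$ with $|W|=k-1$, at positions $m_1<\dots<m_{k-1}$, and produce an unresolved pair. The basic criterion, again from the formula, is that a probe $v_m$ separates $v_i$ from $v_{i+1}$ only if ($m\le i$ and $m\equiv i$) or ($m\ge i+1$ and $m\equiv i+1$) modulo $k$. Since the $k-1$ residues $m_t\bmod k$ cannot exhaust $\mathbb{Z}_k$, on any block of consecutive indices whose flanking probes realize residue sets $A$ (to the left) and $B$ (to the right) one has $|A|+|B|\le k-1$, so some residue $d$ satisfies $d\notin A$ and $d+1\notin B$; any consecutive pair $\{v_i,v_{i+1}\}$ with $i\equiv d$ inside that block is then unresolved. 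I would first dispatch the \emph{spread} case: if some inter-probe gap (or an end segment) spans at least $k$ consecutive indices, all residues occur there and such an $i$ is guaranteed.

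The main obstacle is the \emph{dense} case, where every inter-probe gap and both end segments are short (length $\le k-1$); this can genuinely occur for short $k$-paths (for instance $\{v_3,v_5\}$ in a $3$-path on seven vertices resolves every consecutive pair), so consecutive pairs no longer suffice and one must produce an unresolved \emph{non}-consecutive pair. Here I would exploit reflection symmetry: evenly packed probes make $\sum_m d(v_i,v_m)\approx \tfrac1k\sum_m|i-m|$ a convex, nearly symmetric function of $i$, and I would locate indices $a<b$ in mirror positions relative to the probe configuration for which $\lceil|a-m|/k\rceil=\lceil|b-m|/k\rceil$ holds for every probe $m$ simultaneously. Controlling the ceilings — their residues and the exact boundary indices — so that the reflected pair collides under all $k-1$ probes at once is the delicate step; combining it with the spread case and checking the finitely many tight configurations that survive in small $k$-paths would complete the proof that $\dim_{_M}(G)=k$.
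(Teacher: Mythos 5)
Your upper bound is fine: it uses the same resolving set $W=\{v_1,\dots,v_k\}$ as the paper, and your bookkeeping with the coordinate sum (exactly one coordinate of $r(v_i|W)$ increases when $i$ increases by one, so the sum equals $i-1$ for $i\ge k$) is a correct, slightly different, verification of the same fact.

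The lower bound, however, has a genuine gap, and you have located it yourself: the ``dense'' case. Your residue criterion for when a probe separates a \emph{consecutive} pair is correct, and your example ($k=3$, $n=7$, probes $v_3,v_5$, where $v_2,v_4,v_6$ all receive $(1,1)$ even though every consecutive pair is resolved) proves that the consecutive-pair strategy cannot close the argument. But what remains is exactly the hard part, and ``exploit reflection symmetry\dots locate indices in mirror positions\dots check the finitely many tight configurations'' is a heuristic, not a proof: you never construct the colliding non-consecutive pair, nor give a reason one must exist for an arbitrary $(k-1)$-set of probes. (There is also an off-by-one in the spread case: a gap of exactly $k$ non-probe indices yields only $k-1$ usable consecutive pairs, since a pair containing a probe is resolved by that probe, so the missing residue may be unreachable and that case does not quite close either.) The paper sidesteps all of this by restricting attention to the initial clique $X=\{v_1,\dots,v_{k+1}\}$: probes inside $X$ are adjacent to every other vertex of $X$ and resolve nothing there, while for any probe $w$ outside $X$ the distances from $w$ to the vertices of $X$, taken in index order, form a monotone step function with at most two values, so $w$ separates only those pairs of $X\setminus W$ that straddle its single step. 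The $k-s$ consecutive pairs of $X\setminus W$ (where $s=|W\cap X|$) therefore require $k-s$ distinct outside probes, giving $|W|\ge (k-s)+s=k$ with no case analysis on probe spacing. You would need either to adopt an argument of this kind or to carry out your reflection construction in full before the lower bound stands.
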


\begin{proof}{
Let $G$ be a $k$-path with vertex set $ V(G)=\{v_1,v_2,\ldots,v_n\}$ and edge set $ E(G)=\{v_iv_j:~ |i-j|\leq k\}$.
Therefore, the distance between two vertices $v_r$ and $v_s$ in $G$ is given by $d(v_r,v_s)=\left\lceil {|r-s|\over k}\right\rceil$.
 	
 	At first, let $W=\{v_1,v_2,\ldots,v_k\}$ and $v_i$, $v_j$ be two distinct vertices of $G$ with $k<i<j$. By the division algorithm, there exist integers $r$ and $s$ such that $i=rk+s$, $1\leq s\leq k$. Thus, we have
 $$d(v_i,v_s)=\left\lceil{|i-s|\over k}\right\rceil=\left\lceil{rk\over k}\right\rceil=r,$$
 and
 $$d(v_j,v_s)=\left\lceil{|j-s|\over k}\right\rceil=\left\lceil{rk+(j-i)\over k}\right\rceil=
 r+\left\lceil{j-i\over k}\right\rceil\geq r+1.$$
  This means $W$ is a resolving set for $G$. Hence, $\dim_M(G)\leq |W|=k$.
 
 Now, we show that $\dim_M(G)\geq k$.
 Let $W$ be a  basis of the $k$-path $G$, and let $X=\{v_1,v_2,\ldots,v_{k+1}\}$.
Assume that $|W\cap X|=s$ and $X\setminus W=\{v_{i_1},v_{i_2},\ldots,v_{i_{k+1-s}}\}$, where $1\leq i_1<i_2<\cdots<i_{k+1-s}\leq k+1$. For convince,  let $X'=\{x_1,x_2,\ldots,x_{k+1-s}\}$, where $x_r=v_{i_r}$, for each $r$, $1\leq r\leq k+1-s$.
Since each vertex $v_i$ of the $k$-path $G$ is adjacent to the next $k$ consecutive vertices $\{v_{i+1},\ldots, v_{i+k}\}$, the induced subgraph on $X$ is a $(k+1)$-clique.
Each vertex in $W\cap X$ is adjacent to each vertex in $X'$. Thus, each pair of vertices in $X'$ should be resolved by some element of $W\setminus X$. Assume that $W'=\{w_1,w_2,\ldots,w_t\}$ is a minimum subset of $W\setminus X$ which resolves vertices in $X'$.
Thus, for each $w_j\in W'$ there exists $\{x_r,x_s\}\subseteq X'$ such that $d(w_j,x_r)\neq d(w_j,x_s)$.
For each $j$, $1\leq j\leq t$, let
$$r_j=\min\{r:~d(w_j,x_r)\neq d(w_j,x_{r+1})\},$$
and, let $$A_j=\{x_1,x_2,\ldots,x_{r_j}\},~B_j=\{x_{r_j+1},x_{r_j+2},\ldots,x_{k+1-s}\}.$$
Note that $A_j\cup B_j=X'$, $A_j\cap B_j=\emptyset$, $x_1\in A_j$ and $x_{k+1-s}\in B_j$.
Also, the structure of $G$ implies that
$$d(w_j,x_1)=d(w_j,x_2)=\cdots=d(w_j,x_{r_j}),$$
and $$d(w_j,x_{r_j+1})=d(w_j,x_{r_j+2})=\cdots=d(w_j,x_{k+1-s}).$$
Since $W'$ has the minimum size, for each $1\leq j<j'\leq t$ we have $A_{j}\neq A_{j'}$ (otherwise, $w_{j}$ and $w_{j'}$ resolve the same pair of vertices in $X'$) and hence, $|A_{j}|\neq |A_{j'}|$. Moreover,  
%Since $W'$ resolves vertices of $X'$, for each pair $\{x_r,x_{r+1}\}\subseteq X'$, $1\leq r\leq (k+1-s)-1$, 
 for each $r$, $1\leq r\leq k-s$, there exists $w_j\in W'$ such that $d(w_j,x_r)\neq d(w_j,x_{r+1})$ which implies $|A_j|=r$. 
%there exists $A_j$, $1\leq j\leq t$, such that $x_r\in A_j$ and $x_{r+1}\notin A_j$. This implies %$A_j=\{x_1,x_2,...,x_r\}$ and hence, $|A_j|=r$. 
Therefore, $$t=\left|\{|A_1|,|A_2|,\ldots,|A_t|\}\right|=|\{1,2,\ldots,k-s\}|=k-s.$$
%Since $|W\cap X|=s$ and $|W'|=t=k-s$, we have
Hence, $$|W|=|W\setminus X|+|W\cap X|\geq |W'|+s=(k-s)+s=k,$$
which completes the proof.
 }\end{proof}
 
\begin{definition}
Let $G$ and $H$ be two $2$-trees. We say that  $H$ is a \textit{branch} in $G$ on $\{u,v\}$, for convenience say a $(u,v)$-branch,  if $V(H)\cap V(G)=\{u,v\}$, where $uv$ is an edge of $G$ belonging to only one of the triangles in $H$.
The \textit{length} of a branch in a $2$-tree is the number of it's triangles, which is equal to the number of vertices of branch minus $2$. A cane is a $2$-path with a branch of length one on a specific edge as shown in Figure \ref{cane}.
\begin{center}
\begin{tikzpicture}
[rotate=90,inner sep=0.5mm, place/.style={circle,draw=black,fill=black,thick}]
\node[place] (r1) at (-2,4)  {};
\node[place] (r2) at (-2,3)  {}edge [-,thick](r1);
\node[place] (r3) at (-2,2) {}edge [-,thick](r2);
\node[place] (rk-1) at (-2,1) {}edge [-,thick](r3);
\node[place] (s1) at (-1,4)  {}edge [-,thick](r1);
\node[place] (s2) at (-1,3) {}edge [-,thick](r2)edge [-,thick](s1)edge [-,thick](r1);
\node[place] (s3) at (-1,2) {}edge [-,thick](r3)edge [-,thick](s2)edge [-,thick](r2);
\node[place] (sk-1) at (-1,1) {}edge [-,thick](rk-1)edge [-,thick](s3)edge [-,thick](r3);
\node (dots) at (-1.5,.5) [label=center:${\cdots}$]{};
\node[place] (c2) at (0,4)   {}edge [-,thick](s1)edge [-,thick](s2);
\node[place] (r11) at (-2,0) %[label=left:$(t-1\text{,}t)$]
{};
\node[place] (r22) at (-2,-1) {}edge [-,thick](r11);
\node[place] (s11) at (-1,0) %[label=right:$(t-1\text{,}t-1)$]
{}edge [-,thick](r11);
\node[place] (s22) at (-1,-1)  {}edge [-,thick](r22)edge [-,thick](s11)edge [-,thick](r11);
%\node (dots) at (-1.5,-1.5) [label=center:${\vdots}$]{};
%\node (dots) at (-1.5,-2.5) [label=center:{\em(a)}]{};
%%%__________________________________
%\node[place] (x1) at (2,4)  {};
%\node[place] (x2) at (2,3) {}edge [-,thick](x1);
%\node[place] (x3) at (2,2) {}edge [-,thick](x2);
%\node[place] (xk-1) at (2,1) {}edge [-,thick](x3);
%\node[place] (y1) at (3,4)  {}edge [-,thick](x1);
%\node[place] (y2) at (3,3)  {}edge [-,thick](x2)edge [-,thick](y1)edge [-,thick](x1);
%\node[place] (y3) at (3,2) {}edge [-,thick](x3)edge [-,thick](y2)edge [-,thick](x2);
%\node[place] (yk-1) at (3,1) {}edge [-,thick](xk-1)edge [-,thick](y3)edge [-,thick](x3);
%\node (dots) at (2.5,.5) [label=center:${\vdots}$]{};
%\node[place] (r11) at (2,0) {};
%\node[place] (r22) at (2,-1) {}edge [-,thick](r11);
%\node[place] (s11) at (3,0) {}edge [-,thick](r11);
%\node[place] (s22) at (3,-1) {}edge [-,thick](r22)edge [-,thick](s11)edge [-,thick](r11);
%%\node (dots) at (-9.5,-1.5) [label=center:${\vdots}$]{};
%
%%\node (dots) at (2.5,-2.5) [label=center:{\em(d)}]{};
\end{tikzpicture}
\captionof{figure}{A cane.\label{cane}}
\end{center}

\end{definition}
 
 In the following proposition,  we provide some $2$-trees with metric dimension two other than $2$-paths. 
\begin{pro}\label{lem:d=1}
If $G$ is a $2$-tree of metric dimension two with a basis whose elements are adjacent, then $G$ is a $2$-path or a cane.
\begin{center}
\begin{tikzpicture}
[inner sep=0.5mm, place/.style={circle,draw=black,fill=black,thick}]
\node[place] (r1) at (-6,4) [label=above:$a$] {};
\node[place] (r2) at (-6,3) [label=left:$(1\text{,}2)$] {}edge [-,thick](r1);
\node[place] (r3) at (-6,2) {}edge [-,thick](r2);
\node[place] (rk-1) at (-6,1) {}edge [-,thick](r3);
\node[place] (s1) at (-5,4) [label=above:$b$] {}edge [-,thick](r1);
\node[place] (s2) at (-5,3) [label=right:$(1\text{,}1)$]{}edge [-,thick](r2)edge [-,thick](s1)edge [-,thick](r1);
\node[place] (s3) at (-5,2) {}edge [-,thick](r3)edge [-,thick](s2)edge [-,thick](r2);
\node[place] (sk-1) at (-5,1) {}edge [-,thick](rk-1)edge [-,thick](s3)edge [-,thick](r3);
\node (dots) at (-5.5,.5) [label=center:${\vdots}$]{};
\node[place] (c2) at (-4,4)  [label=above:$(2\text{,}1)$] {}edge [-,thick](s1)edge [-,thick](s2);
\node[place] (r11) at (-6,0) %[label=left:$(t-1\text{,}t)$]
{};
\node[place] (r22) at (-6,-1) [label=left:$(t\text{,}t+1)$] {}edge [dashed,thick](r11);
\node[place] (s11) at (-5,0) %[label=right:$(t-1\text{,}t-1)$]
{}edge [-,thick](r11);
\node[place] (s22) at (-5,-1) [label=right:$(t\text{,}t)$] {}edge [dashed,thick](r22)edge [-,thick](s11)edge [-,thick](r11);
%\node (dots) at (-5.5,-1.5) [label=center:${\vdots}$]{};
\node (dots) at (-5.5,-2.5) [label=center:{\em(a)}]{};
%%%%%%%%%%%%%%%%%%%%%%%%%%%%%%%%%%%%%%%
\node[place] (a1) at (-2,4) {};
\node[place,line width=3pt] (a2) at (-2,3) {}edge [-,thick](a1);
\node[place] (a3) at (-2,2) {}edge [-,thick](a2);
\node[place] (ak-1) at (-2,1) {}edge [-,thick](a3);
%\node[place] (ak) at (-2,0) {}edge [-,thick](ak-1);
\node[place] (b1) at (-1,4) [label=above:$a$] {}edge [-,thick](a1);
\node[place] (b2) at (-1,3) {}edge [-,thick](a2)edge [-,thick](b1)edge [-,thick](a1);
\node[place,line width=3pt] (b3) at (-1,2) {}edge [-,thick](a3)edge [-,thick](b2)edge [-,thick](a2);
\node[place] (bk-1) at (-1,1) {}edge [-,thick](ak-1)edge [-,thick](b3)edge [-,thick](a3);
\node (dots) at (-1.5,.5) [label=center:${\vdots}$]{};
\node[place] (r11) at (-2,0) {};
\node[place] (r22) at (-2,-1) {}edge [dashed,thick](r11);
\node[place] (s11) at (-1,0) {}edge [-,thick](r11);
\node[place] (s22) at (-1,-1) {}edge [dashed,thick](r22)edge [-,thick](s11)edge [-,thick](r11);
%\node (dots) at (-5.5,-1.5) [label=center:${\vdots}$]{};
\node (dots) at (-1.5,-2.5) [label=center:{\em(b)}]{};
\node[place] (c1) at (0,4) [label=above:$b$] {}edge [-,thick](b1)edge [-,thick](b2);
%%%%%%%%%%%%%%%%%%%%%%%%%%%%%%%%%%%%%%%
\node[place] (u1) at (2,4) {};
\node[place] (u2) at (2,3) {}edge [-,thick](u1);
\node[place] (u3) at (2,2) {}edge [-,thick](u2);
\node[place] (uk-1) at (2,1) {}edge [-,thick](u3);
%\node[place] (uk) at (2,0) {}edge [-,thick](uk-1);
\node[place,line width=3pt] (v1) at (3,4) {}edge [-,thick](u1);
\node[place] (v2) at (3,3) {}edge [-,thick](u2)edge [-,thick](v1)edge [-,thick](u1);
\node[place] (v3) at (3,2) {}edge [-,thick](u3)edge [-,thick](v2)edge [-,thick](u2);
\node[place] (vk-1) at (3,1) {}edge [-,thick](uk-1)edge [-,thick](v3)edge [-,thick](u3);
\node (dots) at (2.5,.5) [label=center:${\vdots}$]{};
\node[place,line width=3pt] (c3) at (4,4) {}edge [-,thick](v1)edge [-,thick](v2);
\node[place] (u11) at (2,0){};
\node[place] (u22) at (2,-1)  [label=below:$a$]{}edge [dashed,thick](u11);
\node[place] (v11) at (3,0)  {}edge [-,thick](u11);
\node[place] (v22) at (3,-1) [label=below:$b$] {}edge [dashed,thick](u22)edge [-,thick](v11)edge [-,thick](u11);
\node (dots) at (2.5,-2.5) [label=center:{\em(c)}]{};
%%%%%%%%%%%%%%%%%%%%%%%%%%%%%%%%%%%%%%%%
\node[place] (x1) at (6,4) [label=above:$a$] {};
\node[place] (x2) at (6,3) [label=left:$(1\text{,}2)$]{}edge [-,thick](x1);
\node[place] (x3) at (6,2) {}edge [-,thick](x2);
\node[place] (xk-1) at (6,1) {}edge [-,thick](x3);
%\node[place] (xk) at (6,0) {}edge [-,thick](xk-1);
\node[place] (y1) at (7,4) [label=above:$b$] {}edge [-,thick](x1);
\node[place] (y2) at (7,3)  [label=right:$(1\text{,}1)$]{}edge [-,thick](x2)edge [-,thick](y1)edge [-,thick](x1);
\node[place] (y3) at (7,2) {}edge [-,thick](x3)edge [-,thick](y2)edge [-,thick](x2);
\node[place] (yk-1) at (7,1) {}edge [-,thick](xk-1)edge [-,thick](y3)edge [-,thick](x3);
\node (dots) at (6.5,.5) [label=center:${\vdots}$]{};
\node[place] (r11) at (6,0) {};
\node[place] (r22) at (6,-1) [label=left:$(t\text{,}t+1)$]{}edge [dashed,thick](r11);
\node[place] (s11) at (7,0) {}edge [-,thick](r11);
\node[place] (s22) at (7,-1)[label=right:$(t\text{,}t)$] {}edge [dashed,thick](r22)edge [-,thick](s11)edge [-,thick](r11);
%\node (dots) at (-5.5,-1.5) [label=center:${\vdots}$]{};

\node (dots) at (6.5,-2.5) [label=center:{\em(d)}]{};
%\node[place] (c4) at (8,4) {}edge [-,thick](y1)edge [-,thick](y2);
\end{tikzpicture}

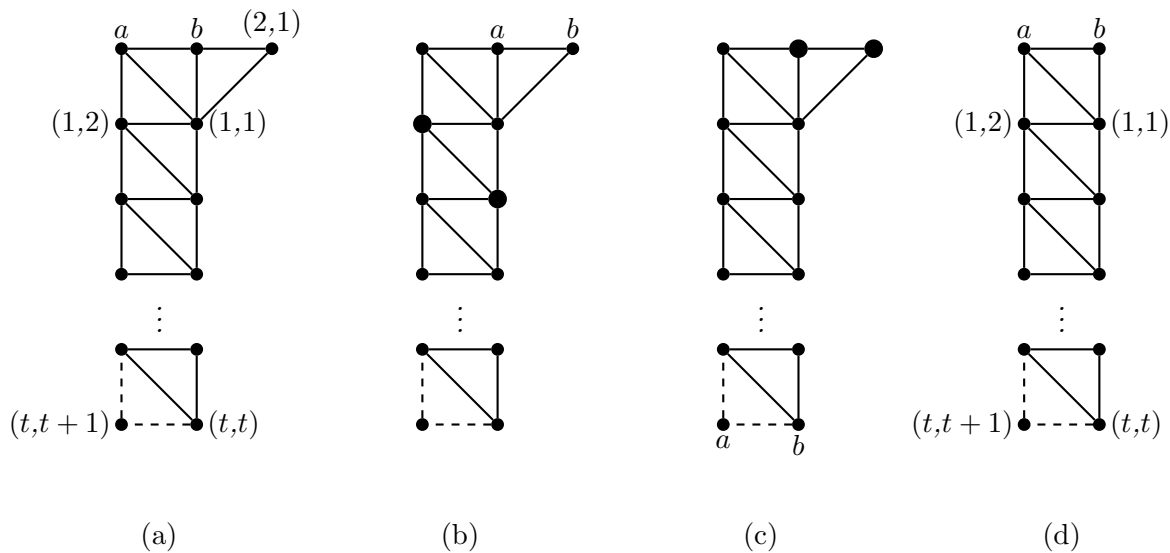
\captionof{figure}{The possible cases for basis $\{a,b\}$ in $2$-tree $G$\label{cane2}}
\end{center}

\end{pro}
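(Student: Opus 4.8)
The plan is to work entirely with the two--coordinate codes $r(w\mid\{a,b\})=(d(w,a),d(w,b))$ and to grow $G$ outward from the edge $ab$. First I would record the local picture. Since $a$ and $b$ are adjacent, part~(1) of Theorem~\ref{thm:degree of basis elements} gives $d(a,b)=1$, so the triangle inequality forces $|d(w,a)-d(w,b)|\le 1$ for every vertex $w$; hence each code has one of the three shapes $(i,i)$, $(i,i+1)$, $(i+1,i)$, and because $\{a,b\}$ is resolving every attainable code is used at most once. As $G$ is a $2$-tree, the edge $ab$ lies in at least one triangle, so $a$ and $b$ have a common neighbour; any such neighbour has code $(1,1)$, so by uniqueness of codes it is the \emph{unique} common neighbour $c$, and $ab$ lies in exactly one triangle $abc$.

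Next I would pin down the neighbourhoods of the basis. Part~(2) of Theorem~\ref{thm:degree of basis elements} gives $\deg(a),\deg(b)\le 3$, while $\delta(G)=2$ gives $\deg(a),\deg(b)\ge 2$. Each of $a,b$ is adjacent to the other and to $c$; a third neighbour $a^{*}$ of $a$ cannot be adjacent to $b$ (otherwise it would be a second common neighbour) and so has code $(1,2)$, and symmetrically a third neighbour $b^{*}$ of $b$ has code $(2,1)$. Thus the configuration near $abc$ is completely governed by the pair $(\deg(a),\deg(b))\in\{2,3\}^{2}$, which is precisely what produces the four cases of Figure~\ref{cane2}.

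The heart of the argument is to show that, apart from the triangle $abc$ and at most one extra pendant triangle, $G$ is a single ladder of triangles, so that $G$ is a $2$-path when no pendant occurs and a cane when one does. I would argue by induction on $n$, peeling a simplicial (degree-$2$) vertex $v\notin\{a,b\}$; such a $v$ exists once $n\ge 4$, since a $2$-tree other than $K_{3}$ has at least two simplicial vertices and $a,b$ cannot both be simplicial (that would force $G=K_{3}$), so at least one simplicial vertex lies outside $\{a,b\}$. Deleting a simplicial vertex changes no distance between the remaining vertices, so $\{a,b\}$ still resolves $G-v$ and $\dim_{M}(G-v)=2$; by induction $G-v$ is a $2$-path or a cane. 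Re-inserting $v$ on the edge $\{x,y\}=N_{G}(v)$ forces its code to be $\bigl(1+\min(d(x,a),d(y,a)),\,1+\min(d(x,b),d(y,b))\bigr)$, and this value must be a code not already present. The task is then to check that the only admissible attachments are the one that lengthens the ladder at its far end (giving again a $2$-path) and, on the side away from the ladder, a single length-one pendant at $abc$ (giving a cane); any other attachment repeats a code. For instance, a pendant of length two on the edge $ac$ would create a second vertex with code $(2,2)$, colliding with the $(2,2)$ vertex already on the ladder, and two ladders growing on opposite sides of $\{a,b\}$ soon produce two vertices sharing a code --- this is exactly why a basis of two adjacent vertices resolves a long $2$-path only when it sits at the extreme end.

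Assembling the cases $(\deg(a),\deg(b))\in\{2,3\}^{2}$ together with whether the extra neighbour continues the ladder or forms the unique pendant yields exactly cases (a)--(d) of Figure~\ref{cane2}, whence $G$ is a $2$-path or a cane. I expect the main obstacle to be the re-insertion step: rigorously enumerating the admissible attachment edges of a $2$-path and of a cane and proving that every attachment beyond the single permitted pendant repeats a code. Care is also needed in the base cases, because for short graphs the basis need not sit at the very end of the ladder --- for example $\{v_{2},v_{4}\}$ resolves $P_{5}^{2}$ --- so the smallest configurations ($n\le 5$) should be verified directly before the induction takes over.
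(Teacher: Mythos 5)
Your proposal follows essentially the same route as the paper: induction on the order, deleting a degree-two simplicial vertex outside the basis (which preserves all remaining distances, so $\{a,b\}$ still resolves the smaller graph), invoking Theorem~\ref{thm:degree of basis elements}(2) to reduce the position of $\{a,b\}$ to the four local configurations of Figure~\ref{cane2}, and then checking which re-attachments of the deleted vertex avoid repeated codes. The enumeration you defer at the re-insertion step is precisely the part the paper also leaves as ``it can be checked,'' so the two arguments agree in both structure and level of detail.
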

\begin{proof}
{We prove the statement by induction on $n$, the order of $G$. If $n=3$, then $G=K_3$ and the statement holds.
 Let $G$ be a $2$-tree of order $n >3$ with a basis $
B=\{a,b\}$, such that $d(a,b)=1$. Since each $2$-tree of order greater than three has two non-adjacent vertices of degree two, there exists a vertex $x\in
V(G)\setminus B$ of degree two. Moreover,  $B$ is a basis for
$G\setminus\{x\}$.

Now, by the induction hypothesis,
$G\setminus\{x\}$ is a path or a cane and by Theorem~\ref{thm:degree of basis elements} (2), the degrees of $a$ and $b$ are at most three. Therefore, $B=\{a,b\}$ is one of the possible cases shown in Figure~\ref{cane2}. Note that dashed edges could be absent. It can be checked that in cases (b) and (c) the  bold vertices get the same metric representation with respect to $B$.
Thus, $B$ is one of the cases (a) or (d), where the metric representations of vertices are denoted in Figure~\ref{cane2}.

Regards to the metric representation of vertices in $G$,  $x$ could be adjacent to the vertices by metric representation $(t, t+1)$ and $(t,t)$ (in the case of not existence of dashed edges $(t-1,t)$ and $(t,t)$) and in the case (d) to the vertices by metric representation $(1,0)$ and $(1,1)$ as well. This concludes that $G$ is also a path or a cane.
}
\end{proof}

The above proposition shows that the inverse of Theorem~\ref{k-path} is not true.
Later on, we focus on the case $k=2$ and construct the family $\mathcal{F}$ of all $2$-trees with metric dimension two.

Let $\mathcal {F}$ be the family of $2$-trees, where each member $G$ of ${\cal F}$ consists of a $2$-tree $G_0$ and some branches on it that,  in the case of existence,  satisfying the following conditions.
\begin{enumerate}
\item
$G_0$ is a $2$-path or  a $2$-tree that is obtained by identifying two specific edges of two disjoint $2$-paths as shown in Figure \ref{fig:2-path P}.
\item
On every edge there is at most one branch.
 \item
$G$ avoids any $(a_i,a_{i+1})$-branch.
\item
Each branch is either a $2$-path or a cane.
\item
In each $(a_i,b_{i})$-branch the degree of $a_i$ is two.
\item
If $G_0$ is as the graph depicted in Figure~\ref{fig:2-path P}($b$), then $G$ avoids any $(a_m, x)$-branch.
\item
 $G$ contains at most one branch on the edges of
the triangle containing $b_ib_{i+1}$ in $G_0$.
\item
The degree of each $b_i$ in $G$ is at most $7$.
\item
 $G$ has at most one branch of length greater than one on the edges of
the triangle containing $a_ia_{i+1}$ in $G_0$.
\item
If $G_0$ is of the form of  Figure \ref{fig:2-path P}($b$), then $(b_{m-1},b_m)$-branch and $(b_m,b_{m+1})$-branch are $2$-path and at most one of them is of length more than one.
\item
For every $i$, $2\leq i\leq k-1$, at most one of the $(b_{i-1},b_i)$-branches and $(b_{i},b_{i+1})$-branches is a cane.
\item
All $(a_i,b_i)$-branches, $(a_{i},b_{i+1})$-branches and $(a_{i},b_{i-1})$-branches are $2$-path.
\end{enumerate}

\begin{center}
\begin{tikzpicture}
[inner sep=0.5mm, place/.style={circle,draw=black,fill=black,thick}]
\node[place] (a1) at (-2,1.5) [label=above:$a_1$] {};
\node[place] (a2) at (-1,1.5) [label=above:$a_2$] {}edge [-,thick](a1);
\node[place] (a3) at (0,1.5) [label=above:$a_3$] {}edge [-,thick](a2);
\node[place] (ak-1) at (1,1.5) [label=above:$a_{k-1}$] {};
\node[place] (ak) at (2,1.5) [label=above:$a_k$] {}edge [-,thick](ak-1);
\node[place] (b1) at (-2,.5) [label=below:$b_1$] {}edge [-,thick](a1);
\node[place] (b2) at (-1,.5) [label=below:$b_2$] {}edge [-,thick](a2)edge [-,thick](b1)edge [-,thick](a1);
\node[place] (b3) at (0,.5) [label=below:$b_3$] {}edge [-,thick](a3)edge [-,thick](b2)edge [-,thick](a2);
\node[place] (bk-1) at (1,.5) [label=below:$b_{k-1}$] {}edge [-,thick](ak-1);
\node[place] (bk) at (2,.5) [label=below:$b_k$] {}edge [-,thick](ak)edge [-,thick](bk-1)edge [-,thick](ak-1);
\node (dots) at (.5,1) [label=center:$\cdots$]{};
\node (a) at (0,-.5) [label=center:$(a)$]{};
%%%_____________________second figure________________
\node[place] (a1') at (-4.5,-2) [label=above:$a_1$] {};
\node[place] (a2') at (-3.5,-2) [label=above:$a_2$] {}edge [-,thick](a1');
\node[place] (a3') at (-2.5,-2) [label=above:$a_3$] {}edge [-,thick](a2');
\node[place] (am-2) at (-1.5,-2) [label=above:$a_{m-2}$] {};
\node[place] (am-1) at (-.5,-2) [label=above:$a_{m-1}$] {}edge [-,thick](am-2);

\node[place] (b1') at (-4.5,-3) [label=below:$b_1$] {}edge [-,thick](a1')edge [-,thick](a2');
\node[place] (b2') at (-3.5,-3) [label=below:$b_2$] {}edge [-,thick](a2')edge [-,thick](b1')edge [-,thick](a3');
\node[place] (b3') at (-2.5,-3) [label=below:$b_3$] {}edge [-,thick](a3')edge [-,thick](b2');
\node[place] (bm-2) at (-1.5,-3) [label=below:$b_{m-2}$] {}edge [-,thick](am-1)edge [-,thick](am-2);
\node[place] (bm-1) at (-.5,-3) [label=below:$b_{m-1}$] {}edge [-,thick](am-1)edge [-,thick](bm-2);
\node (dots'') at (-2,-2.5) [label=center:$\cdots$]{};

\node[place] (am) at (.5,-2) [label=above:$a_m$] {}edge [-,thick](bm-1)edge [-,thick](am-1);
\node[place] (am+1) at (1.5,-2) [label=above:$a_{m+1}$] {}edge [-,thick](am);
\node[place] (am+2) at (2.5,-2) [label=above:$a_{m+2}$] {}edge [-,thick](am+1);
\node[place] (am-1) at (3.5,-2) [label=above:$a_{k-1}$] {};
\node[place] (ak') at (4.5,-2) [label=above:$a_k$] {}edge [-,thick](am-1);
\node[place] (bm) at (.5,-3) [label=below:$b_m$] {}edge [-,thick](am)edge [-,thick](bm-1);
\node[place] (bm+1) at (1.5,-3) [label=below:$b_{m+1}$] {}edge [-,thick](am+1)edge [-,thick](bm)edge [-,thick](am);
\node[place] (bm+2) at (2.5,-3) [label=below:$b_{m+2}$] {}edge [-,thick](am+2)edge [-,thick](bm+1)edge [-,thick](am+1);
\node[place] (bm-1) at (3.5,-3) [label=below:$b_{k-1}$] {}edge [-,thick](am-1);
\node[place] (bk') at (4.5,-3) [label=below:$b_k$] {}edge [-,thick](ak')edge [-,thick](bm-1)edge [-,thick](am-1);
\node (dots') at (3,-2.5) [label=center:$\cdots$]{};
\node (b) at (0,-4) [label=center:$(b)$]{};
\end{tikzpicture}
\captionof{figure}{Two different forms of $G_0$.\label{fig:2-path P}}
\end{center}

\begin{thm}\label{thm:Raft}
If $G\in {\cal F}$, then $\dim_M(G)=2$.
\end{thm}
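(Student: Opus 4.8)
The plan is to establish the two inequalities $\dim_M(G)\ge 2$ and $\dim_M(G)\le 2$ separately. The lower bound is immediate: since $G$ is a $2$-tree on at least three vertices it contains a triangle and so is not a path, and by the result of Khuller et al. and Chartrand et al. quoted in the introduction $\dim_M(G)=1$ forces $G$ to be a path; hence $\dim_M(G)\ge 2$. All the work therefore lies in exhibiting a resolving set of size two. I would take $W$ to consist of the two low-degree vertices at the extreme ends of the spine $G_0$ (concretely $W=\{a_1,a_k\}$ in the notation of Figure~\ref{fig:2-path P}, so that in form $(b)$ the two elements of $W$ are the outer vertices of the two glued $2$-paths), and then prove that the map $v\mapsto r(v\,|\,W)$ is injective on $V(G)$.

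First I would treat the spine $G_0$ alone. Writing $G_0$ as a $2$-path $v_1,\dots,v_N$ — or, in the glued case of Figure~\ref{fig:2-path P}$(b)$, as two such chains sharing an edge — the intra-path distance formula $d(v_r,v_s)=\lceil |r-s|/2\rceil$ from the proof of Theorem~\ref{k-path} gives explicit values $f(v)=d(v,a_1)$ and $g(v)=d(v,a_k)$. The key structural fact is that $f$ is weakly increasing and $g$ weakly decreasing as one moves along the chain away from $a_1$, each fibre of $f$ containing only a bounded number of vertices which $g$ then separates; a short parity computation shows $(f,g)$ is injective on $V(G_0)$. This is essentially the $k=2$ instance of Theorem~\ref{k-path} adapted to the deliberately asymmetric endpoint choice $\{a_1,a_k\}$ (the two genuine degree-two tips would collide for an even-length chain, which is why one tip is replaced by its neighbour). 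In the glued case one checks in addition that two vertices lying on different arms meeting at the cut edge cannot share a representation, because the coordinate measured from the far tip is strictly monotone along each arm.

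Next I would extend the computation to the branch vertices. For a $(u,v)$-branch $H$, every shortest path from a vertex of $W$ to an interior vertex $h$ of $H$ must pass through $u$ or $v$, so $d(w,h)=\min\{d(w,u),d(w,v)\}+d_H(h,\{u,v\})$ for each $w\in W$; thus the representation of $h$ is governed by the already-known representations of $u$ and $v$ together with the depth of $h$ in $H$. Because each branch is itself a $2$-path or a cane (condition~4), its vertices receive distinct depths and hence distinct representations, so no collision occurs inside a single branch. The remaining and genuinely laborious task is to rule out collisions of the other two types — a branch vertex against a spine vertex, and vertices lying in two different branches. Here I would go through the defining conditions one at a time and show that each is exactly the hypothesis needed to break a potential tie: conditions~2,~3,~5 and~9 prevent a branch from reproducing the spine's representations near its foot; condition~6 together with the degree bound~8 controls the behaviour at the high-degree vertices $b_i$ and at the merge vertex $a_m$; and conditions~7,~10,~11 and~12 forbid precisely those pairs of simultaneous branches (two canes, or two long branches about the same triangle) that would otherwise produce equal pairs $(f,g)$.

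I expect the main obstacle to be the completeness of this case analysis rather than any individual computation. The distances to $W$ change by at most one per step, so near the spine, and especially near the high-degree vertices $b_i$ and the gluing vertex $a_m$ of Figure~\ref{fig:2-path P}$(b)$, many vertices already agree in one coordinate; one must verify that the listed restrictions eliminate every surviving coincidence and that no further restriction is silently required. Organising the argument as ``fix the coordinate shared by a hypothetical colliding pair, locate both vertices, and invoke the relevant condition'' keeps the bookkeeping finite, but checking that conditions~(1)--(12) jointly cover all cases for both forms of $G_0$ is where the real care is needed.
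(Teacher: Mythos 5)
Your approach is the same as the paper's: the lower bound from $G$ not being a path, the choice $W=\{a_1,a_k\}$ with one endpoint of the spine replaced by its neighbour, explicit representations on $G_0$ via the $2$-path distance formula, and the observation that a branch vertex's representation is determined by the representations of its two attachment vertices plus its depth in the branch. However, as you yourself flag, the proposal defers the entire collision analysis, and for this theorem that analysis \emph{is} the proof: the twelve conditions are not each paired off against one type of tie in the paper either, so "invoke the relevant condition" is not yet an argument. The device that makes the paper's bookkeeping finite, and which your plan lacks, is the invariant $d_1-d_2$ of a representation $(d_1,d_2)$: the paper computes, for every admissible branch type attached at position $i$, the exact set of values $d_1-d_2$ taken on that branch (always a set of at most three consecutive integers centred near $2i-k-1$), and likewise for the $a$-row and $b$-row of the spine. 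Vertices whose difference-sets are disjoint cannot collide, which reduces the cross-branch and branch-versus-spine checks to the few overlapping attachment sites, precisely the situations excluded by conditions (7)--(12). Without this (or an equivalent separating statistic), your scheme of "fix the shared coordinate and locate both vertices" leaves an unbounded-looking family of potential collisions, e.g.\ between deep vertices of two long branches rooted at distant triangles, which the difference invariant dismisses in one line. So the route is right, but the proposal is an outline of the paper's proof rather than a proof; to complete it you would need to write down the explicit representations for each of the branch cases (vertical, oblique, horizontal, and the special edges at $a_m$, $b_m$ in form $(b)$ of Figure~\ref{fig:2-path P}) and supply a separating invariant for the cross-branch comparisons.
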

\begin{proof}{
Let $G\in {\cal F}$. Through the proof all of notations are the same as those which are used to introduce the family $\cal F$ and $G_0$ in Figure \ref{fig:2-path P}.
Since $G$ is not a path, $\dim_M(G)\geq 2$.
Let $W=\{a_1,a_k\}$. We  show  in both possible cases for $G_0$ that $W$ is a resolving set for $G$ and hence, $\dim_M(G)=2$.

{\textbf{Case 1.} } $G_0$ is a $2$-path as shown in Figure \ref{fig:2-path P}(a).\\
The metric representation of the vertices $\{a_1,a_2,\ldots,a_k,b_1,b_2,\ldots,b_k\}$ are as follows.
\begin{align*}
&r(a_i|W)=(i-1,k-i),1\leq i\leq k,\\
&r(b_1|W)=(1,k),\\
&r(b_j|W)=(j-1,k-j+1), 2\leq j\leq k.
\end{align*}

Thus, different vertices of $G_0$ have different metric representations.
Moreover, note that
$$\{d_1-d_2:~(d_1,d_2)=r(a_i|W), ~1\leq i\leq k\}=\{1-k,3-k,5-k,\ldots,2i-k-1,\ldots,k-3,k-1\},$$
and
$$\{d_1-d_2:~(d_1,d_2)=r(b_i|W), ~1\leq i\leq k\}=\{1-k,2-k,4-k,\ldots,2i-k-2,\ldots,k-4,k-2\}.$$
If $G=G_0$, then we are done. Suppose that $G\neq G_0$ and let $H$ be a branch of $G$ on an edge $e$ of $G_0$. 
Regards to the  structures of graphs in  ${\cal F}$, we consider the following different possibilities.
\begin{itemize}
%%%%%%%%%%%%%%%%%%%%%%%%%%%%%%%%%%%%%%%%%%%%%%%%%%%%%%%%%%%%%%%%%%%%%
%%%%%%%%%%%%%%%%%%%%%%%%%%%%%%%%%%%%%%%%%%%%%%%%%%%%%%%%%%%%%%%%%%%%%
\item $H$ is a branch on the vertical edge $e=a_ib_i$, $2\leq i\leq k-1$.\\
Note that by the definition of ${\cal F}$, $H$ is a $2$-path and $\deg_H(a_i)=2$. Let $V(H)=\{x_1,x_2,\ldots,x_t\}$ where $x_1=a_i$, $x_2=b_i$, and $E(H)=\{x_rx_s:~|r-s|\leq 2\}$. If $j$ is odd, then $d(x_j,a_1)=d(x_j,a_i)+d(a_i,a_1)$ and $d(x_j,a_k)=d(x_j,a_i)+d(a_i,a_k)$. If $j$ is even, then $d(x_j,a_1)=d(x_j,b_i)+d(b_i,a_1)$ and $d(x_j,a_k)=d(x_j,b_i)+d(b_i,a_k)$. Hence, we have
\begin{eqnarray*}
r(x_j|W)=
\left\{ \begin{array}{ll} (i-1+\lfloor {j\over 2}\rfloor,k-i+\lfloor {j\over 2}\rfloor) & ~~j ~\mbox{is odd} \\[.2cm]
(i-1+\lfloor {j\over 2}\rfloor-1,k-i+\lfloor {j\over 2}\rfloor) & ~~j ~\mbox{is even}. \end{array} \right.
\end{eqnarray*}
%Therefore, distinct vertices of $H$ have different metric representations. Also,
%the metric representation of the vertices in $V(H)$ are different from the metric representations of the vertices in $V(G_0)\setminus \{a_i,b_i\}$.
Moreover, note that
$$\{d_1-d_2: ~(d_1,d_2)=r(x_j|W), ~1\leq j\leq t\}=\{2i-k-1,2i-k-2\}.$$
%%%%%%%%%%%%%%%%%%%%%%%%%%%%%%%%%%%%%%%%%%%%%%%%%%%%%%%%%%%%%%%%%%%%%
%%%%%%%%%%%%%%%%%%%%%%%%%%%%%%%%%%%%%%%%%%%%%%%%%%%%%%%%%%%%%%%%%%%%%
\item $H$ is a branch on the oblique edge $e=a_ib_{i+1}$, $2\leq i\leq k-1$.\\
By the definition of ${\cal F}$, $H$ is a $2$-path and $\deg_H(a_i)=2$. Let $V(H)=\{x_1,x_2,\ldots,x_t\}$ where $x_1=a_i$, $x_2=b_{i+1}$, and $E(H)=\{x_rx_s:~|r-s|\leq 2\}$. If $j$ is odd, then $d(x_j,a_1)=d(x_j,a_i)+d(a_i,a_1)$ and $d(x_j,a_k)=d(x_j,a_i)+d(a_i,a_k)$. If $j$ is even, then $d(x_j,a_1)=d(x_j,b_{i+1})+d(b_{i+1},a_1)$ and $d(x_j,a_k)=d(x_j,b_{i+1})+d(b_{i+1},a_k)$. Hence, we have
\begin{eqnarray*}
r(x_j|W)=
\left\{ \begin{array}{ll} (i-1+\lfloor {j\over 2}\rfloor,k-i+\lfloor {j\over 2}\rfloor) & j ~\mbox{is odd} \\[.2cm]
(i-1+\lfloor {j\over 2}\rfloor,k-i+\lfloor {j\over 2}\rfloor-1) & j ~\mbox{is even} . \end{array} \right.
\end{eqnarray*}
%Therefore, distinct vertices of $H$ have different metric representations. Also,
%the metric representation of the vertices in $V(H)$ are different from the metric representations of the vertices in $V(G_0)\setminus \{a_i,b_{i+1}\}$.
Moreover, note that
$$\{d_1-d_2: ~(d_1,d_2)=r(x_j|W), ~1\leq j\leq t\}=\{2i-k-1,2i-k\}.$$
%%%%%%%%%%%%%%%%%%%%%%%%%%%%%%%%%%%%%%%%%%%%%%%%%%%%%%%%%%%%%%%%%%%%%%
\item $H$ is a branch on the horizontal edge $e=b_ib_{i+1}$, $1\leq i\leq k-1$. \\
Using the definition of ${\cal F}$, $H$ is either a $2$-path or a cane. Generally, assume that
$$\{x_1,x_2,\ldots,x_t\}\subseteq V(H)\subseteq \{x_1,x_2,\ldots,x_t\}\cup\{x\},$$
where the induced subgraph of $H$ on $\{x_1,x_2,\ldots,x_t\}$ is a $2$-path with the edge set $\{x_rx_s:~|r-s|\leq 2\}$.
We consider two different possibilities.
\begin{itemize}
\item[a)] $x_1=b_i$, $x_2=b_{i+1}$. Hence, if  $H$ is a cane,  then we have $N_H(x)=\{b_i,x_3\}$. Similar to the previous cases, we have
\begin{align*}
&r(x_1|W)=(i-1,k-i+1),\\
&r(x_j|W)=
\left\{ \begin{array}{ll} (i-1+\lfloor {j\over 2}\rfloor,k-i+\lfloor {j\over 2}\rfloor) & j\geq3~ \mbox{is odd} \\[.2cm]
(i-1+\lfloor {j\over 2}\rfloor,k-i+\lfloor {j\over 2}\rfloor-1) & j ~\mbox{is even} . \end{array} \right.
\end{align*}
Also, if $H$ is a cane, then $r(x|W)=(i-1+1,k-i+2)$.\\
%\\Hence, distinct vertices of $H$ have different metric representations. Also,
%the metric representation of the vertices in $V(H)$ are different from the metric representations of the vertices in $V(G_0)\setminus \{b_i,b_{i+1}\}$.
%%%%%%%%%%%%%%%%%%%%%%%%%%%%%%%%%%%%%%%%%%%%%%%%%%%%%%%%%%%%
\item[b)] $x_1=b_{i+1}$, $x_2=b_i$. Hence, if $H$ is a cane, then we have $N_H(x)=\{b_{i+1},x_3\}$. Similarly, we have
\begin{align*}
&r(x_1|W)=(i-1+1,k-i),\\	
&r(x_j|W)=
\left\{ \begin{array}{ll} (i-1+\lfloor {j\over 2}\rfloor,k-i+\lfloor {j\over 2}\rfloor) &  j ~\mbox{is odd} \\[.2cm]
(i-1+\lfloor {j\over 2}\rfloor-1,k-i+\lfloor {j\over 2}\rfloor) & j ~\mbox{is even} . \end{array} \right.
\end{align*}
Also, if $H$ is a cane, then $r(x|W)=(i-1+2,k-i+1)$. \\ 
%\\Thus, distinct vertices of $H$ have different metric representations. Also,
%the metric representation of the vertices in $V(H)$ are different from the metric representations of the vertices in $V(G_0)\setminus \{b_i,b_{i+1}\}$.
\end{itemize}
Note that in both states (and regardless of being a $2$-path or a cane), we have
$$\{d_1-d_2: ~(d_1,d_2)=r(v|W), ~v\in V(H)\}=\{2i-k-2,2i-k-1,2i-k\}.$$
\end{itemize}

Therefore, in all the above cases,  distinct vertices of $H$ have different metric representations. Also,
the metric representation of the vertices in $V(H)$ are different from the metric representations of the vertices in $V(G_0)\setminus \{x,y\}$, where $H$ is a $(x,y)$-branch.
Moreover, using the subtraction value of two coordinates in the metric representation of each vertex, it is easy to check  that vertices of different (possible) branches on $G_0$ (satisfying the conditions mentioned in the definition of ${\cal F}$) have different metric representations. Thus, in this case $W$ is a resolving set for $G$.

%************************************************************
%************************************************************

{\textbf{Case 2.}}  $G_0$ is a  $2$-tree of the form Figure \ref{fig:2-path P}(b).\\
The metric representation of the vertices $\{a_1,a_2,\ldots,a_m,\ldots,a_k\}\cup\{b_1,b_2,\ldots,b_m,\ldots,b_k\}$ are as follows.
\begin{align*}
&r(a_i|W)=(i-1,k-i),1\leq i\leq k,\\
&r(b_j|W)=
\left\{
\begin{array}{ll}   (j,k-j) & 1\leq j\leq m-1 \\  (m,k-m+1) & j=m  \\  (j-1,k-j+1) & m+1\leq j\leq k. \end{array}  \right.
\end{align*}

Therefore, different vertices of $G_0$ have different metric representations.
Moreover, note that
\begin{eqnarray*}
\{d_1-d_2: ~(d_1,d_2)=r(a_i|W), ~1\leq i\leq k\}=~~~~~~~~~~~~~~~~~~~\\
\{1-k,3-k,5-k,\ldots,2m-k-3,2m-k-1,2m-k+1,\ldots,k-3,k-1\},
\end{eqnarray*}
and
\begin{eqnarray*}
\{d_1-d_2:~(d_1,d_2)=r(b_j|W), ~1\leq j\leq k\}=~~~~~~~~~~~~~~~~\\
\{2-k,4-k,6-k,\ldots,2m-k-2,2m-k-1,2m-k,\ldots,k-4,k-2\}.
\end{eqnarray*}
If $G=G_0$, then we are done. Hence, suppose that $G\neq G_0$ and let $H$ be a branch of $G$ on an edge $e$ of $G_0$. Again, using the possible structures of $H$ according to the definition of ${\cal F}$, we consider the following different cases.
\begin{itemize}
%%%%%%%%%%%%%%%%%%%%%%%%%%%%%%%%%%%%%%%%%%%%%%%%%%%%%%%%%%%%%%%%%%%%%
%%%%%%%%%%%%%%%%%%%%%%%%%%%%%%%%%%%%%%%%%%%%%%%%%%%%%%%%%%%%%%%%%%%%%
\item $H$ is a branch on the vertical edge $e=a_ib_i$, $2\leq i\leq m-1$.\\
Note that by the definition of ${\cal F}$, $H$ is a $2$-path and $\deg_H(a_i)=2$. Let $V(H)=\{x_1,x_2,\ldots,x_t\}$ where $x_1=a_i$, $x_2=b_i$, and $E(H)=\{x_rx_s:~|r-s|\leq 2\}$.
It is straightforward to check that
\begin{eqnarray*}
r(x_j|W)=
\left\{ \begin{array}{ll} (i-1+\lfloor {j\over 2}\rfloor,k-i+\lfloor {j\over 2}\rfloor) & j ~\mbox{is odd} \\[.2cm]
(i+\lfloor {j\over 2}\rfloor-1,k-i+\lfloor {j\over 2}\rfloor-1) & j ~\mbox{is even}. \end{array} \right.
\end{eqnarray*}
%Therefore, distinct vertices of $H$ have different metric representations. Also,
%the metric representation of the vertices in $V(H)$ are different from the metric representations of the vertices in $V(G_0)\setminus \{a_i,b_i\}$.
Moreover, note that
$$\{d_1-d_2: ~(d_1,d_2)=r(x_j|W), ~1\leq j\leq t\}=\{2i-k-1,2i-k\}.$$
%%%%%%%%%%%%%%%%%%%%%%%%%%%%%%%%%%%%%%%%%%%%%%%%%%%%%%%%%%%%%%%%%%%%%
%%%%%%%%%%%%%%%%%%%%%%%%%%%%%%%%%%%%%%%%%%%%%%%%%%%%%%%%%%%%%%%%%%%%%
\item $H$ is a branch on the vertical edge $e=a_ib_i$, $m+1\leq i\leq k-1$.\\
By the definition of ${\cal F}$, $H$ is a $2$-path and $\deg_H(a_i)=2$. Let $V(H)=\{x_1,x_2,\ldots,x_t\}$ where $x_1=a_i$, $x_2=b_i$, and $E(H)=\{x_rx_s:~|r-s|\leq 2\}$.
We have
\begin{eqnarray*}
r(x_j|W)=
\left\{ \begin{array}{ll} (i-1+\lfloor {j\over 2}\rfloor,k-i+\lfloor {j\over 2}\rfloor) & j ~\mbox{is odd} \\[.2cm]
(i+\lfloor {j\over 2}\rfloor-2,k-i+\lfloor {j\over 2}\rfloor) & j ~\mbox{is even} . \end{array} \right.
\end{eqnarray*}
%Therefore, distinct vertices of $H$ have different metric representations. Also,
%the metric representation of the vertices in $V(H)$ are different from the metric representations of the vertices in $V(G_0)\setminus \{a_i,b_i\}$.
Moreover, note that
$$\{d_1-d_2: ~(d_1,d_2)=r(x_j|W), ~1\leq j\leq t\}=\{2i-k-1,2i-k-2\}.$$
%%%%%%%%%%%%%%%%%%%%%%%%%%%%%%%%%%%%%%%%%%%%%%%%%%%%%%%%%%%%%%%%%%%%%
%%%%%%%%%%%%%%%%%%%%%%%%%%%%%%%%%%%%%%%%%%%%%%%%%%%%%%%%%%%%%%%%%%%%%
\item $H$ is a branch on the oblique edge $e=a_ib_{i-1}$, $2\leq i\leq m-1$.\\
Since $G\in {\cal F}$, $H$ is a $2$-path and $\deg_H(a_i)=2$. Let $V(H)=\{x_1,x_2,\ldots,x_t\}$ where $x_1=a_i$, $x_2=b_{i-1}$, and $E(H)=\{x_rx_s:~|r-s|\leq 2\}$.
We have
\begin{eqnarray*}
r(x_j|W)=
\left\{ \begin{array}{ll} (i-1+\lfloor {j\over 2}\rfloor,k-i+\lfloor {j\over 2}\rfloor) & j ~\mbox{is odd} \\[.2cm]
(i+\lfloor {j\over 2}\rfloor-2,k-i+\lfloor {j\over 2}\rfloor) & j ~\mbox{is even} . \end{array} \right.
\end{eqnarray*}
%Therefore, distinct vertices of $H$ have different metric representations. Also,
%the metric representation of the vertices in $V(H)$ are different from the metric representations of the vertices in $V(G_0)\setminus \{a_i,b_{i-1}\}$.
Moreover, 
$$\{d_1-d_2: ~(d_1,d_2)=r(x_j|W), ~1\leq j\leq t\}=\{2i-k-1,2i-k-2\}.$$
%%%%%%%%%%%%%%%%%%%%%%%%%%%%%%%%%%%%%%%%%%%%%%%%%%%%%%%%%%%%%%%%%%%%%%
\item $H$ is a branch on the oblique edge $e=a_ib_{i+1}$, $m+1\leq i\leq k-1$.\\
 We know that $H$ is a $2$-path and $\deg_H(a_i)=2$. Let $V(H)=\{x_1,x_2,\ldots,x_t\}$ where $x_1=a_i$, $x_2=b_{i+1}$, and $E(H)=\{x_rx_s:~|r-s|\leq 2\}$.
Similarly, it can be easily checked that 
\begin{eqnarray*}
r(x_j|W)=
\left\{ \begin{array}{ll} (i-1+\lfloor {j\over 2}\rfloor,k-i+\lfloor {j\over 2}\rfloor) & j ~\mbox{is odd} \\[.2cm]
(i+\lfloor {j\over 2}\rfloor-1,k-i+\lfloor {j\over 2}\rfloor-1) & j ~\mbox{is even} . \end{array} \right.
\end{eqnarray*}
%Therefore, distinct vertices of $H$ have different metric representations. Also,
%the metric representation of the vertices in $V(H)$ are different from the metric representations of the vertices in $V(G_0)\setminus \{a_i,b_{i+1}\}$.
Moreover, note that
$$\{d_1-d_2: ~(d_1,d_2)=r(x_j|W), ~1\leq j\leq t\}=\{2i-k-1,2i-k\}.$$
%%%%%%%%%%%%%%%%%%%%%%%%%%%%%%%%%%%%%%%%%%%%%%%%%%%%%%%%%%%%%%%%%%%%%%
%%%%%%%%%%%%%%%%%%%%%%%%%%%%%%%%%%%%%%%%%%%%%%%%%%%%%%%%%%%%%%%%%%%%%%
%%%%%%%%%%%%%%%%%%%%%%%%%%%%%%%%%%%%%%%%%%%%%%%%%%%%%%%%%%%%%%%%%%%%%%
\item $H$ is a branch on the horizontal edge $e=b_ib_{i+1}$, $1\leq i\leq m-2$. \\
Using the definition of ${\cal F}$, $H$ is either a $2$-path or a cane. Generally, assume that
$$\{x_1,x_2,\ldots,x_t\}\subseteq V(H)\subseteq \{x_1,x_2,\ldots,x_t\}\cup\{x\},$$
where the induced subgraph of $H$ on $\{x_1,x_2,\ldots,x_t\}$ is a $2$-path with the edge set $\{x_rx_s:~|r-s|\leq 2\}$.
We consider two different possibilities.
\begin{itemize}
\item[a)] $x_1=b_i$, $x_2=b_{i+1}$.  Hence, if  $H$ is a cane,  then we have $N_H(x)=\{b_i,x_3\}$. Similar to the previous cases, we have
\begin{align*}
&r(x_1|W)=(i,k-i),\\
&r(x_j|W)=
\left\{ \begin{array}{ll} (i+\lfloor {j\over 2}\rfloor,k-i+\lfloor {j\over 2}\rfloor-1) &  j\geq 3~\mbox{is odd} \\[.2cm]
(i+\lfloor {j\over 2}\rfloor,k-i+\lfloor {j\over 2}\rfloor-2) & j ~\mbox{is even} . \end{array} \right.
\end{align*}
Also, if $H$ is a cane, then $r(x|W)=(i+1,k-i+1)$.\\
%\\Hence, distinct vertices of $H$ have different metric representations. Also,
%the metric representation of the vertices in $V(H)$ are different from the metric representations of the vertices in $V(G_0)\setminus \{b_i,b_{i+1}\}$.
%%%%%%%%%%%%%%%%%%%%%%%%%%%%%%%%%%%%%%%%%%%%%%%%%%%%%%%%%%%%
\item[b)] $x_1=b_{i+1}$, $x_2=b_i$.  Hence,  if  $H$ is a cane,  then we have $N_H(x)=\{b_{i+1},x_3\}$. Similarly, we have
\begin{align*}
&r(x_1|W)=(i+1,k-i-1),\\
&r(x_j|W)=
\left\{ \begin{array}{ll} (i+\lfloor {j\over 2}\rfloor,k-i+\lfloor {j\over 2}\rfloor-1) & j\geq 3~\mbox{is odd} \\[.2cm]
(i+\lfloor {j\over 2}\rfloor-1,k-i+\lfloor {j\over 2}\rfloor-1) & \mbox{is even}. \end{array} \right.
\end{align*}
Also, if $H$ is a cane, then $r(x|W)=(i+2,k-i)$.\\ 
%\\Thus, distinct vertices of $H$ have different metric representations. Also,
%the metric representation of the vertices in $V(H)$ are different from the metric representations of the vertices in $V(G_0)\setminus \{b_i,b_{i+1}\}$.
\end{itemize}
Note that in the both states (and regardless of being a $2$-path or a cane) we have
$$\{d_1-d_2: ~(d_1,d_2)=r(v|W),  v\in V(H)\}=\{2i-k,2i-k+1,2i-k+2\}.$$
%%%%%%%%%%%%%%%%%%%%%%%%%%%%%%%%%%%%%%%%%%%%%%%%%%%%%%%%%%%%%%%%%%%%%%
%%%%%%%%%%%%%%%%%%%%%%%%%%%%%%%%%%%%%%%%%%%%%%%%%%%%%%%%%%%%%%%%%%%%%%
%%%%%%%%%%%%%%%%%%%%%%%%%%%%%%%%%%%%%%%%%%%%%%%%%%%%%%%%%%%%%%%%%%%%%%
\item $H$ is a branch on the horizontal edge $e=b_{m-1}b_m$. \\
By the definition of ${\cal F}$, $H$ is a $2$-path and $\deg_H(b_{m-1})=2$. Let $V(H)=\{x_1,x_2,\ldots,x_t\}$ where $x_1=b_{m-1}$, $x_2=b_m$, and $E(H)=\{x_rx_s:~|r-s|\leq 2\}$.
We have
\begin{eqnarray*}
r(x_j|W)=
\left\{ \begin{array}{ll} (m+\lfloor {j\over 2}\rfloor-1,k-m+\lfloor {j\over 2}\rfloor+1) & j ~\mbox{is odd} \\[.2cm]
(m+\lfloor {j\over 2}\rfloor-1,k-m+\lfloor {j\over 2}\rfloor) & j ~\mbox{is even}. \end{array} \right.
\end{eqnarray*}
%Therefore, distinct vertices of $H$ have different metric representations. Also,
%the metric representation of the vertices in $V(H)$ are different from the metric representations of the vertices in $V(G_0)\setminus \{b_{m-1},b_m\}$.
Moreover, note that
$$\{d_1-d_2: ~(d_1,d_2)=r(x_j|W), ~1\leq j\leq t\}=\{2m-k-2,2m-k-1\}.$$
%%%%%%%%%%%%%%%%%%%%%%%%%%%%%%%%%%%%%%%%%%%%%%%%%%%%%%%%%%%%%%%%%%%%%%
%%%%%%%%%%%%%%%%%%%%%%%%%%%%%%%%%%%%%%%%%%%%%%%%%%%%%%%%%%%%%%%%%%%%%%
%%%%%%%%%%%%%%%%%%%%%%%%%%%%%%%%%%%%%%%%%%%%%%%%%%%%%%%%%%%%%%%%%%%%%%
\item $H$ is a branch on the horizontal edge $e=b_mb_{m+1}$. \\
By the definition of ${\cal F}$, $H$ is a $2$-path and $\deg_H(b_{m+1})=2$. Let $V(H)=\{x_1,x_2,\ldots,x_t\}$ where $x_1=b_{m+1}$, $x_2=b_m$, and $E(H)=\{x_rx_s:~|r-s|\leq 2\}$.
We have
\begin{eqnarray*}
r(x_j|W)=
\left\{ \begin{array}{ll} (m+\lfloor {j\over 2}\rfloor,k-m+\lfloor {j\over 2}\rfloor) & j ~\mbox{is odd} \\[.2cm]
(m+\lfloor {j\over 2}\rfloor-1,k-m+\lfloor {j\over 2}\rfloor) & ~j ~~\mbox{even} . \end{array} \right.
\end{eqnarray*}
%Therefore, distinct vertices of $H$ have different metric representations. Also,
%the metric representation of the vertices in $V(H)$ are different from the metric representations of the vertices in $V(G_0)\setminus \{b_m,b_{m+1}\}$.
Moreover, note that
$$\{d_1-d_2: ~(d_1,d_2)=r(x_j|W), ~1\leq j\leq t\}=\{2m-k-1,2m-k\}.$$
%%%%%%%%%%%%%%%%%%%%%%%%%%%%%%%%%%%%%%%%%%%%%%%%%%%%%%%%%%%%%%%%%%%%%%
%%%%%%%%%%%%%%%%%%%%%%%%%%%%%%%%%%%%%%%%%%%%%%%%%%%%%%%%%%%%%%%%%%%%%%
%%%%%%%%%%%%%%%%%%%%%%%%%%%%%%%%%%%%%%%%%%%%%%%%%%%%%%%%%%%%%%%%%%%%%%
\item $H$ is a branch on the horizontal edge $e=b_ib_{i+1}$, $m+1\leq i\leq k-1$. \\
Using the definition of ${\cal F}$, $H$ is either a $2$-path or a cane. Generally, assume that
$$\{x_1,x_2,\ldots,x_t\}\subseteq V(H)\subseteq \{x_1,x_2,\ldots,x_t\}\cup\{x\},$$
where the induced subgraph of $H$ on $\{x_1,x_2,\ldots,x_t\}$ is a $2$-path with the edge set $\{x_rx_s:~|r-s|\leq 2\}$.
Again, we consider two different possibilities.
\begin{itemize}
\item[a)] $x_1=b_i$, $x_2=b_{i+1}$. Hence,  if  $H$ is a cane and  $N_H(x)=\{b_i,x_3\}$, then  We have
\begin{align*}
&r(x_1|W)=(i-1,k-i+1),\\
&r(x_j|W)=
\left\{ \begin{array}{ll} (i+\lfloor {j\over 2}\rfloor-1,k-i+\lfloor {j\over 2}\rfloor) & j\geq 3~ \mbox{is odd} \\[.2cm]
(i+\lfloor {j\over 2}\rfloor-1,k-i+\lfloor {j\over 2}\rfloor-1) & j ~\mbox{is even} . \end{array} \right.
\end{align*}
Also, if $H$ is a cane, then $r(x|W)=(i,k-i+2)$. \\
%\\Hence, distinct vertices of $H$ have different metric representations. Also,
%the metric representation of the vertices in $V(H)$ are different from the metric representations of the vertices in $V(G_0)\setminus \{b_i,b_{i+1}\}$.
%%%%%%%%%%%%%%%%%%%%%%%%%%%%%%%%%%%%%%%%%%%%%%%%%%%%%%%%%%%%
\item[b)] $x_1=b_{i+1}$, $x_2=b_i$.  Hence, if  $H$ is a cane,  then we have $N_H(x)=\{b_{i+1},x_3\}$. Similarly, we have
\begin{align*}
&r(x_1|W)=(i,k-i),\\
&r(x_j|W)=
\left\{ \begin{array}{ll} (i+\lfloor {j\over 2}\rfloor-1,k-i+\lfloor {j\over 2}\rfloor) & j\geq3~ \mbox{is odd} \\[.2cm]
(i+\lfloor {j\over 2}\rfloor-2,k-i+\lfloor {j\over 2}\rfloor) & j ~\mbox{is even} . \end{array} \right.
\end{align*}
Also, if $H$ is a cane, then $r(x|W)=(i+1,k-i+1)$.\\
%\\Thus, distinct vertices of $H$ have different metric representations. Also,
%the metric representation of the vertices in $V(H)$ are different from the metric representations of the vertices in $V(G_0)\setminus \{b_i,b_{i+1}\}$.
\end{itemize}
Note that in the both  states (and regardless of being a $2$-path or a cane) we have
$$\{d_1-d_2:~(d_1,d_2)=r(v|W),  v\in V(H)\}=\{2i-k-2,2i-k-1,2i-k\}.$$
%%%%%%%%%%%%%%%%%%%%%%%%%%%%%%%%%%%%%%%%%%%%%%%%%%%%%%%%%%%%%%%%%%%%%%
%%%%%%%%%%%%%%%%%%%%%%%%%%%%%%%%%%%%%%%%%%%%%%%%%%%%%%%%%%%%%%%%%%%%%%
%%%%%%%%%%%%%%%%%%%%%%%%%%%%%%%%%%%%%%%%%%%%%%%%%%%%%%%%%%%%%%%%%%%%%%
\end{itemize}
Therefore, in all of above cases, distinct vertices of $H$ have different metric representations. Also,
the metric representation of the vertices in $V(H)$ are different from the metric representations of the vertices in $V(G_0)\setminus \{x,y\}$, where $H$ is a $(x,y)$-branch.  Moreover, using the subtraction value of two coordinates in the metric representation of each vertex, it is easy to check that vertices of different (possible) branches on $G_0$ (satisfying the conditions mentioned in the definition of ${\cal F}$) have different metric representations. Thus, in this case  $W$ is a resolving set for $G$.
}\end{proof}
To prove the converse of Theorem \ref{thm:Raft}, we need the following lemma.
\begin{lemma}\label{lem:}
Let $H$ be a $\{u,v\}$-branch of $G$ and let $\{a,b\}$ be a basis for $G\cup H$. If $\{a,b\}\cap V(H)\subseteq \{u,v\}$, then $\{u,v\}$ is a metric basis for $H$.
\end{lemma}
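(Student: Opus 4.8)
The plan is to argue by contradiction: assume $\{u,v\}$ fails to resolve $H$, and then show that the basis $\{a,b\}$ cannot resolve $G\cup H$ either. The whole argument rests on one structural observation, namely that $\{u,v\}$ is a vertex separator of $G\cup H$ cutting $V(H)\setminus\{u,v\}$ off from $V(G)\setminus\{u,v\}$, combined with the fact that $uv$ is an edge of both $G$ and $H$.

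First I would record two distance facts. Since any walk between a vertex of $V(H)\setminus\{u,v\}$ and a vertex of $V(G)\setminus\{u,v\}$ must meet $\{u,v\}$, and since $uv\in E(G\cup H)$ makes any detour through $G$ from $u$ to $v$ (or back) no shorter than the single edge $uv$ lying inside $H$, distances inside $H$ are unchanged by the attachment: $d_{G\cup H}(w,w')=d_H(w,w')$ for all $w,w'\in V(H)$. Consequently, for every $w\in V(H)$ and every $z\in V(G)\setminus\{u,v\}$ a geodesic from $w$ to $z$ is forced to cross the separator at $u$ or at $v$, so
\[
 d_{G\cup H}(w,z)=\min\{\, d_H(w,u)+d_{G\cup H}(u,z),\ d_H(w,v)+d_{G\cup H}(v,z)\,\}.
\]

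Second, I would use the hypothesis $\{a,b\}\cap V(H)\subseteq\{u,v\}$ to conclude that both $a$ and $b$ lie in $V(G)$: a basis element inside $H$ can only be $u$ or $v$ (both of which belong to $V(G)$), while a basis element outside $H$ lies in $V(G)\setminus\{u,v\}$. Now suppose, toward a contradiction, that some distinct $w,w'\in V(H)$ satisfy $d_H(w,u)=d_H(w',u)$ and $d_H(w,v)=d_H(w',v)$. Substituting these equalities into the two distance descriptions above yields $d_{G\cup H}(w,a)=d_{G\cup H}(w',a)$ and $d_{G\cup H}(w,b)=d_{G\cup H}(w',b)$: one checks the case $a\in\{u,v\}$ directly (distances inside $H$ are preserved) and the case $a\in V(G)\setminus\{u,v\}$ through the min-formula, and likewise for $b$. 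Thus $\{a,b\}$ fails to distinguish $w$ from $w'$, contradicting that it is a basis for $G\cup H$. Hence $\{u,v\}$ resolves $H$.

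Finally, to upgrade ``resolving set'' to ``metric basis'' I would note that $H$, being a branch, contains at least one triangle, so $H$ is not a path and $\dim_M(H)\ge 2$; since $\{u,v\}$ is a resolving set of size two, it is minimum, i.e.\ a metric basis for $H$. The step I expect to need the most care is the first one: checking that adjoining $G$ along the edge $uv$ creates no shortcuts among the vertices of $H$ (so that $H$-distances are intrinsic) and that every geodesic to a vertex of $G$ is funnelled through the separator $\{u,v\}$. Once these two distance descriptions are justified, the remainder is a routine substitution of the assumed equalities.
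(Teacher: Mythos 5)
Your proposal is correct and follows essentially the same route as the paper: assume two vertices of $H$ share distances to both $u$ and $v$, use the fact that $\{u,v\}$ separates $H$ from the rest together with the min-formula $d(w,z)=\min\{d(w,u)+d(u,z),\,d(w,v)+d(v,z)\}$ to conclude they also share distances to $a$ and $b$, contradicting that $\{a,b\}$ resolves $G\cup H$. Your write-up is in fact slightly more careful than the paper's, since you explicitly verify that attaching $G$ along the edge $uv$ creates no shortcuts inside $H$ and you justify the minimality of $\{u,v\}$ (via $\dim_M(H)\ge 2$), both of which the paper leaves implicit.
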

\begin{proof}{
 Suppose on the contrary, there are two different vertices $x$ and $y$ in $H$ such that
 $$d(x,u)=d(y,u)=r,~~d(x,v)=d(y,v)=s.$$
 Since $H$ is a branch on $\{u,v\}$,  each path connecting a vertex in $H$ with a vertex in $V(G)\setminus V(H)$  passes through $u$ or $v$. Assume that
 $$d(u,a)=r_1,~~d(v,a)=s_1,~~d(u,b)=r_2,~~d(v,b)=s_2.$$
 Hence,
 $$d(x,a)=\min\{r+r_1, s+s_1\}=d(y,a),~~d(x,b)=\min\{r+r_2,s+s_2\}=d(y,b).$$
 This  contradicts that  $\{a,b\}$ is a resolving set for $G\cup H$.
 }\end{proof}

Now, we prove that every $2$-dimensional $2$-tree belongs to the family $\cal F$.

\begin{thm}
If $G$ is a $2$-tree of metric dimension two, then $G\in {\cal F}$.
\end{thm}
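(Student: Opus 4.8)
The plan is to prove the converse by induction on the order $n$ of $G$, peeling off a degree-two vertex and invoking the structural constraints from Theorem~\ref{thm:degree of basis elements} together with Lemma~\ref{lem:}. Let $\{a,b\}$ be a basis for $G$. By Theorem~\ref{thm:degree of basis elements}(1) there is a unique shortest path $P$ between $a$ and $b$, and by parts (2) and (3) the degrees of $a,b$ are at most three while each internal vertex of $P$ has degree at most five. The first step is to locate a $2$-path ``spine'' inside $G$: I would argue that $P$ together with a second path of triangles running alongside it (forming the $a_i$ and $b_i$ rows of $G_0$ in Figure~\ref{fig:2-path P}) is forced, because in a $2$-tree consecutive maximal cliques share an edge, and the distance-growth pattern observed in the proof of Theorem~\ref{thm:Raft} (where $r(a_i|W)=(i-1,k-i)$) is essentially the only way two landmarks can resolve a long chain of triangles. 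This identifies $G_0$ as one of the two forms of Figure~\ref{fig:2-path P}, the branching form~(b) arising precisely when $a$ and $b$ fail to lie at the two ends of a single $2$-path.

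The core of the argument is then to show that every attachment to $G_0$ must be a branch satisfying conditions~(1)--(12) defining $\cal F$. Here I would use Lemma~\ref{lem:} as the workhorse: any maximal subgraph $H$ hanging off an edge $\{u,v\}$ of $G_0$, with $\{a,b\}\cap V(H)\subseteq\{u,v\}$, must have $\{u,v\}$ as a metric basis, so $H$ is itself a $2$-dimensional $2$-tree resolved by an adjacent pair, whence by Proposition~\ref{lem:d=1} it is a $2$-path or a cane. That immediately delivers condition~(4). The remaining conditions are separation constraints: two branches, or a branch plus part of $G_0$, must not produce a pair of vertices with identical representations $(d_1,d_2)$. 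I would verify each condition by the same ``subtraction value'' $d_1-d_2$ bookkeeping used in Theorem~\ref{thm:Raft}---a branch on a given edge occupies a specific contiguous band of $d_1-d_2$ values, and two branches whose bands overlap (e.g.\ two canes on adjacent horizontal edges, or a branch on an $a_ia_{i+1}$ edge) would collide, which is exactly what conditions~(2),(3),(7),(9),(11) forbid, while the degree bound in~(8) is forced by Theorem~\ref{thm:degree of basis elements}(3) applied to the internal vertices $b_i$.

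Concretely, for the induction I would take a degree-two vertex $x\notin\{a,b\}$ (which exists since every $2$-tree of order $>3$ has two nonadjacent degree-two vertices), note that $\{a,b\}$ remains a basis for $G-x$, so $G-x\in\cal F$ by the inductive hypothesis, and then classify the two clique-neighbours of $x$ by their representations. Re-adding $x$ either lengthens an existing branch, starts a new length-one branch, or extends the spine $G_0$; I would check that in each case the resulting representation of $x$ is distinct from all others exactly when the $\cal F$-conditions hold, and that any placement violating a condition reproduces one of the coincidences already ruled out. The base case $n=3$ is $K_3\in\cal F$ trivially.

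The main obstacle will be the first step---rigorously forcing the global shape of $G_0$ and proving that $G$ cannot deviate from the two admissible forms. Locally, Lemma~\ref{lem:} and Proposition~\ref{lem:d=1} control each branch, but showing that the $a$--$b$ landmarks cannot resolve a $2$-tree whose backbone branches in a more complicated way (say a genuine tree of $2$-paths rather than the single identification allowed in Figure~\ref{fig:2-path P}(b)) requires a careful argument that any third ``arm'' of triangles forces two vertices onto the same pair of distances. I expect this to hinge on the fact that a single pair of landmarks partitions vertices into at most a linear number of distance-classes, and that a branching backbone overloads some class; making this counting precise, rather than merely plausible from the figures, is where the real work lies.
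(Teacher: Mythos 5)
There is a genuine gap, and you have named it yourself: the step that forces the global shape of $G_0$ is never carried out, only asserted to be ``essentially the only way'' two landmarks can resolve a chain of triangles. Everything downstream (the branch classification, the $d_1-d_2$ bookkeeping) presupposes that the backbone is one of the two forms of Figure~\ref{fig:2-path P}, so without this step the proof does not start. The paper's device here is concrete and you are missing it: take a \emph{minimal induced $2$-connected subgraph} $H$ of $G$ containing both basis vertices $a$ and $b$. Because $G$ is a $2$-tree with clique number three, minimality forces $H$ to be a ladder of triangles as in Figure~\ref{myfig} with $\deg_H(a)=\deg_H(b)=2$; a short check of representations rules out $\{a,b\}=\{a_1,b_k\}$ and $\{b_1,a_k\}$, so $\{a,b\}=\{a_1,a_k\}$; and then Theorem~\ref{thm:degree of basis elements} gives $\Delta(H)\le 5$, a degree-$5$ vertex $b_j$ is impossible (it would receive the same representation as $a_j$), and two degree-$5$ vertices $a_i,a_{i'}$ would force a degree-$5$ $b_j$ between them. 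Hence $H$ is either a $2$-path or has exactly one degree-$5$ vertex $a_m$, which is exactly Figure~\ref{fig:2-path P}(a) or (b). This is a finite, local argument, not the global ``distance-class counting'' you anticipate needing, and it is the real content of the theorem.

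Your proposed induction skeleton is also a genuinely different (and riskier) route than the paper's. The paper argues directly: having pinned down $G_0$, it verifies properties (2)--(12) one at a time by exhibiting an explicit pair of vertices with equal representations whenever a property fails (e.g.\ two branches on a triangle containing $b_ib_{i+1}$ give $d(a_1,x)=d(a_1,y)$ and $d(a_k,x)=d(a_k,y)$ for the two first branch vertices), plus Lemma~\ref{lem:} and Proposition~\ref{lem:d=1} for property (4) exactly as you intend. Your induction --- delete a degree-two vertex $x$, apply the hypothesis to $G-x$, and re-attach --- is sound in principle (distances are unchanged since $x$ is simplicial), but membership in $\cal F$ is existential over decompositions, so the decomposition of $G-x$ witnessing $G-x\in\cal F$ need not extend to one for $G$ (re-adding $x$ can change which subgraph plays the role of $G_0$, or which edge of $G_0$ the basis sits on). That re-attachment case analysis is at least as long as the paper's direct verification and is only sketched. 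I would recommend abandoning the induction for the main theorem and adopting the minimal-$2$-connected-subgraph argument, keeping your (correct) use of Lemma~\ref{lem:} and Proposition~\ref{lem:d=1} for the individual branches.
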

\begin{proof}{
Let  $G$ be a $2$-tree and $\{a,b\}$ be a basis of $G$. If $d(a,b)=1$, then by Proposition~\ref{lem:d=1}, $G$ is a $2$-path or a cane which belongs to ${\cal F}$. Thus, assume that $d(a,b) >1$ and let $H$ be  a minimal induced $2$-connected  subgraph of $G$ as shown in Figure \ref{myfig}, containing $a$ and $b$.
Since the clique number of $G$ is three, in each square exactly one of the dashed edges are allowed. Moreover,
by the minimality of $H$ we have $deg_H(a)=deg_H(b)=2$, where $a\in \{a_1, b_1\}$ and $b\in \{a_k,b_k\}$.
Hence, one of two vertices $a_1, b_1$ or one of two vertices $a_k, b_k$ may not exist.
One can check that $\{a,b\}\neq \{a_1, b_k\}$  and  $\{a,b\}\neq \{b_1, a_k\}$, otherwise, two neighbours of $a$ or $b$ get the same metric representation. Thus, by the symmetry,  we may assume  $\{a,b\}=\{a_1, a_k\}$.

\begin{center}
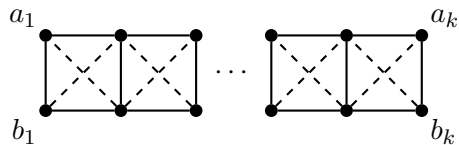

\begin{tikzpicture}
[inner sep=0.5mm, place/.style={circle,draw=black,fill=black,thick}]
\node[place] (a1) at (-2,1) [label=above left:$a_1$] {};
\node[place] (a2) at (-1,1)             %[label=above:$a_2$]
 {}edge [-,thick](a1);
\node[place] (a3) at (0,1)          %[label=above:$a_3$]
 {}edge [-,thick](a2);
\node[place] (a4) at (1,1)          %[label=above:$a_4$]
 {};
\node[place] (ak-1) at (2,1)        %[label=above:$a_{k-1}$]
 {}edge [-,thick](a4);
\node[place] (ak) at (3,1) [label=above right:$a_k$] {}edge [-,thick](ak-1);
\node[place] (b1) at (-2,0) [label=below left:$b_1$] {}edge [-,thick](a1)edge [-,thick,dashed](a2);
\node[place] (b2) at (-1,0)         %[label=below:$b_2$]
{}edge [-,thick](a2)edge [-,thick](b1)edge [-,thick,dashed](a1)edge [-,thick,dashed](a3);
\node[place] (b3) at (0,0)          %[label=below:$b_3$]
{}edge [-,thick](a3)edge [-,thick](b2)edge [-,thick,dashed](a2);
\node[place] (b4) at (1,0)          %[label=below:$b_4$]
 {}edge [-,thick,dashed](ak-1)edge [-,thick](a4);
\node[place] (bk-1) at (2,0)        %[label=below:$b_{k-1}$]
 {}edge [-,thick](ak-1)edge [-,thick,dashed](ak)edge [-,thick,dashed](a4)edge [-,thick](b4);
\node[place] (bk) at (3,0) [label=below right:$b_k$] {}edge [-,thick](ak)edge [-,thick](bk-1)edge [-,thick,dashed](ak-1);
\node (dots) at (.5,.5) [label=center:$\cdots$]{};
\end{tikzpicture}
\captionof{figure}{A minimal induced $2$-connected  subgraph of $G$.
\label{myfig}}
\end{center}

If $\Delta(H)\leq4$, then $H$ is a $2$-path as shown in Figure~\ref{fig:2-path P}(a).  Otherwise $\Delta(H)=5$. If there exists a vertex $b_j$ of degree $5$, then it  can be easily checked that $b_j$ and $a_j$ have the same representation with respect to $\{a_1, a_k\}$. 
Also, existence of two vertices $a_i$ and $a_{i'}$ both of degree $5$, $i\leq i'$, implies that there exists some vertex $b_j$, $i\leq j\leq i'$, of degree $5$, which is impossible. Thus, there exists a unique $a_i$ of degree $5$.
Therefore,  $H$ is the graph shown in Figure \ref{fig:2-path P}(b). Thus, $H$ is a $2$-path or a $2$-tree obtained by identifying the specific edge, say $a_mb_m$, of two $2$-paths (see Figure~\ref{fig:2-path P}(b)), where $B=\{a_1,a_k\}$.
Thus, $G$ satisfies property (1).

Clearly, on every edge there is at most one branch; thus, property (2) follows.  Also, $G$ avoids any $(a_i,a_{i+1})$-branch, because each vertex adjacent to both $a_i$ and $a_{i+1}$ has the same metric representation as $b_{i}$ or $b_{i+1}$.
Thus, $G$ contains only $(a_i,b_i)$-branches, $(a_i,b_{i+1})$-branches, $(a_{i+1},b_{i})$-branches or  $(b_i,b_{i+1})$-branches; which implies property (3). 
Moreover, by Proposition \ref{lem:d=1} and Lemma \ref{lem:}, each of these branches is
a $2$-path or a cane. Therefore, property (4) holds. 
Also, by Theorem \ref{thm:degree of basis elements}, for every $i$, $1\leq i\leq k$, there is at most one $(a_i,x)$-branch in $G$. Moreover, in each $(a_i,b_i)$-branch the degree of $a_i$ is two, which shows trueness of property (5).%a $2$-path and 

To see property (6), first note that by property (3) there is no $(a_{m-1}, a_m)$-branch or $(a_m, a_{m+1})$-branch. Moreover, in each $(a_m, x)$-branch, for $x\in\{b_{m-1}, b_m, b_{m+1}\}$, the unique neighbour of $a_m$ on the branch has the same metric representation as $b_m$.  

To show that $G$ has property (7), suppose that a triangle $a_ib_ib_{i+1}$ has more than one branch. By Theorem \ref{thm:degree of basis elements}, at most one of  $(a_i,b_i)$-branch and $(a_i,b_{i+1})$-branch exists. Therefore, $b_ib_{i+1}$ has a branch $H_1$ and one of the edges $a_ib_i$ or $a_ib_{i+1}$  has another branch $H_2$. Let $x$ and $y$ be the vertices of distance one from $G_0$ on branches $H_1$ and $H_2$, respectively. Hence, $d(a_1,x)=d(a_1,y)=i$ and $d(a_k,x)=d(a_k,y)=k-i+1$. That is, $\{a_1,a_k\}$ is not a basis of $G$, which is a contradiction. A similar reason works for triangle $a_ib_{i-1}b_i$. Hence, $G$ has property (7).

Let $(d_1,d_2)$ be metric representation of $b_i$. Then metric representations of each neighbour of $b_i$ which is out of $G_0$ could be one of $(d_1+1,d_2+1),~(d_1+1,d_2)$ or $(d_1,d_2+1)$. Thus,  $b_i$ has at most three neighbours out of $G_0$. Hence, the degree of $b_i$ in $G$ is at most $7$ that is property (8).

If there are two branches of length at least $2$ on a triangle containing $a_ia_{i+1}$, then the metric representation of the second vertices on these branches are the same, a contradiction. Thus, $G$ satisfies property (9).

If $H$ is a $(b_{m-1},b_m)$-branch of cane type, then one can find two vertices in $N_G(b_m)\cup N_G(b_{m-1})$ with the same metric representation. A similar argument holds whenever $H$ is  a $(b_{m},b_{m+1})$-branch of cane type.
If there is a $(b_{m-1},b_{m})$-branch, say  $H_1$, and a $(b_{m},b_{m+1})$-branch, say $H_2$, both of  length at least two, then $b_m$ has a neighbour in $H_1$ with the same metric representation as a  neighbour of $b_m$ in $H_2$. Hence, property (10) holds. 

Suppose that two branches on $(b_{i-1},b_i)$ and $(b_{i},b_{i+1})$  are canes. In this case, it can be checked that in the set of neighbours of $b_i$ in these branches there are two vertices with the same metric representation. Thus, $G$ satisfies property (11).
%Let $x$ and $y$ be the vertices of degree two in $H_1$ and $H_2$ adjacent to $b_{i}$, respectively.
% It is easy to see that $d(x,a_1)=d(b_i,a_1)+1=d(y,a_1)$ and $d(x,a_k)=d(b_i,a_k)+1=d(y,a_k)$. That is $\{a_1,a_k\}$ is not a basis for $G$. 

%All $(a_i,b_i)$-branches and $(a_{i},b_{i+1})$-branches are $2$-path.
%Since $\{a_1,a_k\}$ is a metric basis for $G$, the unique shortest path connecting them is $P=a_1a_2\ldots a_n$. 
Using Theorem 1.1 the degree of each $a_i$ in $G$, $1<i<n$, is at most five. Note that $\deg(a_i)\in\{4,5\}$. Now suppose that $H$ is a branch on the edge $\{a_i,b_i\}$, $\{a_i,b_{i+1}\}$ or $\{a_i,b_{i-1}\}$. If $H$ is a cane, then $\deg_G(a_i)\geq 6$ or two neighbours of $b_{i-1}$, $b_{i}$ or $b_{i+1}$ in $H$ get the same metric representation, which both are contradictions. Thus, each branch on the edge $\{a_i,b_{i-1}\}$, $\{a_i,b_i\}$ or $\{a_i,b_{i+1}\}$ is a 2-path and $G$ satisfies property (12).
}\end{proof}

%____________________Bibliography__________________________

%

%___________________________Highlights________________________

%\begin{list}{$\circ$}{\textbf{Highlights}}
%\item For every two graphs $G$ and $H$, $\mu(G\Box H)\leq \max\{\mu(G),\mu(H)\}$.
%\item For every two connected bipartite graphs $G$ and $H$, where $G\Box H\neq K_2$, $\mu(G\Box H)\leq 2$.
%\item If $G\ncong C_{4k+r}$, $r\in\{1,2,3\}$, is a connected graph contains no maximal simple path of length $3\pmod4$ and for every edge $e=xy$ which is a maximal simple path of $G$, $\deg(x)\neq\deg(y)$, then $\mu(G)\leq2$.
%\item If $G\ncong C_{4k+r}$, $r\in\{1,2,3\}$, is a connected graph contains no edge as a maximal simple path, then $\mu(G)\leq 2$.
%\item If $G$ is a connected bipartite graph contains a vertex $v$ such that $\deg(v)>\deg(u)$ for every $u\in N(v)$ and $G-v$ is connected, then $\mu(G)\leq 2$.
%\end{list}
\end{document}